\theoremstyle{plain}
    \newtheorem{thm}{Theorem}[section]
    \renewcommand{\thethm}
    {\arabic{section}.\arabic{thm}}
     \newtheorem{conjecture}[thm]{Conjecture}
    \newtheorem{lemma}[thm]{Lemma}
    \newtheorem{proposition}[thm]{Proposition}
    \newtheorem{question}[thm]{Question}
    \newtheorem{theorem}[thm]{Theorem}
\theoremstyle{definition}
    \newtheorem{notation}[thm]{Notation}
        \newtheorem{example}[thm]{Example}
    \newtheorem*{notation*}{Notation and Terminology}
    \newtheorem{remark}[thm]{Remark}
\theoremstyle{remark}
\DeclareMathOperator{\Sym}{Sym}
\newcommand{\Rmnum}[1]{\expandafter\@slowromancap\romannumeral #1@}
\begin{document}
\bibliographystyle{alpha}
\title[Dynamical rigidity]
{Bigness of tangent bundles and dynamical rigidity of Fano manifolds of Picard number 1 \\[1em] \normalfont\footnotesize With an appendix by Jie Liu}

\author{Feng Shao} 
\author{Guolei Zhong}

\address{Jie Liu, Institute of Mathematics, Academy of Mathematics and Systems Science, Chinese Academy of Sciences, Beijing, 100190, China}
\email{jliu@amss.ac.cn}

\address
{
\textsc{Feng Shao, Center for Complex Geometry, Institute for Basic Science (IBS), 55 Expo-ro,  Yuseong-gu, Daejeon, 34126, Republic of Korea.
}}
\email{shaofeng@amss.ac.cn, shaofeng@ibs.re.kr}
\address
{
\textsc{Guolei Zhong, Center for Complex Geometry, Institute for Basic Science (IBS), 55 Expo-ro,  Yuseong-gu, Daejeon, 34126, Republic of Korea.
}}
\email{zhongguolei@u.nus.edu, guolei@ibs.re.kr}

\begin{abstract}
Let \(f\colon X\to Y\) be a surjective morphism of  Fano manifolds of Picard number 1 whose VMRTs at a general point are not dual defective.
Suppose that the tangent bundle \(T_X\) is big. 
We show that \(f\) is an isomorphism unless \(Y\) is a projective space. 
As applications, we explore the bigness of the tangent bundles of complete intersections, del Pezzo manifolds, and Mukai manifolds, as well as their dynamical rigidity.
\end{abstract}
\subjclass[2010]{
14J40, 14J45.
}

\keywords{endomorphism, big tangent bundle, Fano manifold, VMRT}

\maketitle

\tableofcontents

\section{Introduction}
We work over the field $\mathbb{C}$ of complex numbers. 
A classical question in algebraic and complex geometry asks for a description of a Fano manifold of Picard number 1 admitting a non-isomorphic surjective endomorphism. 
The following folklore conjecture of the 1980s predicts that the projective space is the only possibility for the existence of such a non-trivial endomorphism. 
This conjecture serves as one of the starting points of this paper. 

\begin{conjecture}\label{main-conj-pn}
Let $X$ be a Fano manifold of Picard number 1. 
Suppose that $X$ admits a non-isomorphic surjective endomorphism.
Then $X$ is a projective space.
\end{conjecture}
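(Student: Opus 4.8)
The plan is to argue by contradiction and to reduce Conjecture~\ref{main-conj-pn} to two geometric properties of $X$, which one then checks so as to invoke the main theorem of this paper.

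\emph{Step 1 (set-up).} Suppose $X$ is a Fano manifold of Picard number $1$, not a projective space, carrying a non-isomorphic surjective endomorphism $f\colon X\to X$. Since $\mathrm{Pic}(X)=\mathbb{Z}\cdot H$ with $H$ the ample generator, we have $f^{*}H\equiv qH$ for some integer $q\ge 2$; writing $-K_X\equiv rH$ with $r$ the Fano index, the ramification formula $K_X=f^{*}K_X+R$ yields $R\equiv r(q-1)H$, so the ramification divisor is ample. Being Fano, $X$ is uniruled, so we fix a minimal dominating family of rational curves with VMRT $\mathcal{C}_x\subset\mathbb{P}(T_xX)$ at a general point $x$.

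\emph{Step 2 (reduction).} Suppose in addition that $T_X$ is big and that $\mathcal{C}_x$ is not dual defective for general $x$. Applying the main theorem to $f$ with target $Y=X$, we conclude that $f$ is an isomorphism, contradicting the hypothesis. Thus the conjecture follows once these two properties are known for every such $X$. The non-dual-defectiveness is expected to come from the rigidity of minimal rational curves under endomorphisms --- in the spirit of Hwang--Mok and Amerik--Rovinsky--Van de Ven, $f$ sends minimal rational curves to minimal rational curves and is \'etale along a general one, which severely restricts $\mathcal{C}_x$ --- together with the classification of dual defective varieties; since the VMRT of $\mathbb{P}^n$ is essentially the only ``trivially'' dual defective case, one expects $\mathcal{C}_x$ to be non-dual-defective whenever $X\ne\mathbb{P}^n$.

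\emph{Step 3 (bigness of $T_X$: the main obstacle).} The crux is to show $T_X$ is big, i.e.\ to produce enough global sections of twists of $\Sym^{m}T_X$, equivalently to control the Iitaka dimension of the tautological divisor on $\mathbb{P}(T_X)$. I do not expect a uniform argument valid for all Fano manifolds $X$ of Picard number $1$ bearing a non-isomorphic endomorphism; the realistic strategy --- the one this paper carries out --- is to verify bigness of $T_X$ class by class. For complete intersections, del Pezzo manifolds, and Mukai manifolds the required sections are built by explicit geometry: Euler sequences, the universal family of lines or conics, and projection from the ambient projective space. Combining this with Step~2 establishes Conjecture~\ref{main-conj-pn} for each of these classes, and a full proof would require either a general bigness theorem or an independent treatment of the Fano manifolds for which $T_X$ fails to be big.
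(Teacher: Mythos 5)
The statement you are trying to prove is a conjecture, and neither the paper nor your proposal proves it; your own Step~3 concedes this, so what you have written is a reduction strategy, not a proof, and the reduction itself has two genuine gaps. First, the non-dual-defectiveness you ``expect'' in Step~2 is simply false in general: there are Fano manifolds of Picard number~1 other than $\mathbb{P}^n$ whose VMRT at a general point is dual defective --- the paper itself points out the quintic del Pezzo sixfold, whose VMRT is $\mathbb{P}^1\times\mathbb{P}^2\subseteq\mathbb{P}^5$, and $\mathrm{Gr}(2,5)$, whose VMRT is self-dual; the classification in the appendix of \cite{FL22} lists further homogeneous examples. So the hypothesis of Theorem~\ref{thm-main} cannot be expected to hold for all candidate $X$, and Hwang--Mok type rigidity of minimal rational curves under $f$ does not repair this.

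Second, and more seriously, your Step~3 misreads how the applications actually go: for complete intersections, del Pezzo manifolds of low degree, and Mukai manifolds of low genus the paper proves that $T_X$ is \emph{not} big (Theorems~\ref{thm-complete-intersection}, \ref{thm-del-Pezzo}, Proposition~\ref{prop-Mukai-nonbig}), so the route ``verify bigness class by class and apply Theorem~\ref{thm-main} with $Y=X$'' is unavailable precisely for those classes. The conjecture is settled there by a different mechanism: a non-isomorphic surjective endomorphism forces Bott vanishing and hence local rigidity $H^1(X,T_X)=0$ by Kawakami--Totaro \cite{KT23}; local rigidity fails for non-linear complete intersections other than the quadric \cite{BFM20} (and the quadric is excluded by \cite{PS89}), while for Mukai manifolds it leaves only a short classified list, almost all of which are almost homogeneous and hence excluded by \cite{HN11}. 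Theorem~\ref{thm-main} is invoked only for the single residual case, the codimension-3 linear section of the 10-dimensional spinor variety, which genuinely does have big tangent bundle and non-dual-defective VMRT (Example~\ref{exa-spinor}). Your Step~2 reduction (applying Theorem~\ref{thm-main} with $Y=X$, using ampleness of the ramification divisor implicitly through that theorem) is correct as far as it goes, but without the local-rigidity input and the classification results your strategy cannot recover even the cases the paper handles, let alone the full conjecture.
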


Conjecture \ref{main-conj-pn} has been verified in low dimension and  many other special cases:
\begin{enumerate}
\item Almost homogeneous spaces (\cite{PS89}, \cite{HN11});
\item Smooth  hypersurfaces of a projective space (\cite{PS89}, \cite{Bea01}; cf.~\cite{KT23});
\item Fano threefolds  (\cite{ARV99}, \cite{HM03}; cf.~\cite{KT23});
\item Fano manifolds containing a rational curve with trivial normal bundle (\cite{HM03});
\item Fano fourfolds with Fano index \(\geq 2\) (\cite{SZ22}; cf.~\cite{KT23}); and 
\item Del Pezzo manifolds, i.e., the Fano index \(=\dim(X)-1\) (\cite{SZ22}, \cite{AKP08}).
\end{enumerate}

In this paper, we investigate Conjecture \ref{main-conj-pn} for the case when the tangent bundle \(T_X\) is big.
Recall that the tangent bundle \(T_X\) of a smooth projective variety \(X\) is called \textit{big} (resp. \textit{pseudo-effective}) if the tautological line bundle \(\mathcal{O}_{\mathbb{P}(T_X)}(1)\) of the projectivized tangent bundle (in the sense of Grothendieck) is big (resp. pseudo-effective). 
After Mori's magnificent solution \cite{Mor79} to the Hartshorne conjecture, it has become evident that a certain positivity  condition of the tangent bundle would impose 
restrictive geometry on the underlying variety. 
In the spirit of this expectation, our first main result below gives a dynamical rigidity when the tangent bundle of the source is big and the variety of minimal rational tangents (VMRT for short) is not dual defective along a general point of $X$ and of $Y$.

\begin{theorem}\label{thm-main}
Let \(X\) and \(Y\) be the Fano manifolds of Picard number 1. 
Suppose that the VMRT \(\mathcal{C}_x\subseteq\mathbb{P}(\Omega_{X,x})\) (resp. \(\mathcal{C}_y\subseteq\mathbb{P}(\Omega_{Y,y})\)) at a general point \(x\in X\) (resp. \(y\in Y\)) is not dual defective. 
Suppose further that the tangent bundle \(T_X\) is big.
Then any surjective morphism \(X\to Y\) has to be an isomorphism unless \(Y\) is a projective space; in particular, \(X\) admits no non-isomorphic surjective endomorphism unless it is a projective space.
\end{theorem}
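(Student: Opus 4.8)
The plan is to reduce to the case where $f$ is a ramified finite cover and then compare varieties of minimal rational tangents across $f$.

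\smallskip
\textit{Reduction.} Since $f$ is surjective and $\rho(X)=\rho(Y)=1$, the divisor $f^*H_Y$ (where $H_Y$ is the ample generator of $N^1(Y)$) is a nonzero nef class on a Picard-number-one variety, hence ample; so a curve contained in a fibre of $f$ would meet $f^*H_Y$ in degree $0$, which is absurd. Thus every fibre of $f$ is finite, $\dim X=\dim Y=:n$, and $f$ is finite, hence flat since $X$ and $Y$ are smooth, of some degree $d$. If $d=1$ then $f$ is finite and birational onto the smooth (hence normal) variety $Y$, so it is an isomorphism; assume therefore $d\ge 2$. A Fano manifold is rationally connected and therefore simply connected, so $f$ is not étale; its ramification divisor $R\subset X$ is then a nonzero effective divisor with $K_X\sim f^*K_Y+R$, and writing $-K_X\sim i_XH_X$, $-K_Y\sim i_YH_Y$, $f^*H_Y\sim\delta H_X$ with $i_X,i_Y,\delta\in\mathbb Z_{>0}$ we get $R\sim(\delta i_Y-i_X)H_X$, in particular $\delta i_Y>i_X$. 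We also record that bigness of $T_X$ descends: applying $\Sym^m$ to the generically isomorphic injection $df\colon T_X\hookrightarrow f^*T_Y$ gives $\Sym^mT_X\hookrightarrow f^*\Sym^mT_Y$, so $h^0(X,\Sym^mT_X)\le h^0(Y,\Sym^mT_Y\otimes f_*\mathcal O_X)$ by the projection formula; as $f_*\mathcal O_X$ is locally free it embeds into $(H_Y^{\otimes k})^{\oplus N}$ for suitable $k,N$, so the right side is $\le N\,h^0\bigl(\mathbb P(T_Y),\mathcal O_{\mathbb P(T_Y)}(m)\otimes\pi^*H_Y^{\otimes k}\bigr)$, and since the volume of a line bundle is unchanged by a fixed twist, maximal growth of $h^0(X,\Sym^mT_X)$ forces $\mathcal O_{\mathbb P(T_Y)}(1)$ to be big. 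Henceforth assume $Y$ is not a projective space, aiming for a contradiction.

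\smallskip
\textit{VMRT comparison.} Fix minimal rational components on $X$ and on $Y$ with associated VMRT-bundles, and let $\check{\mathcal C}_X\subset\mathbb P(T_X)$, $\check{\mathcal C}_Y\subset\mathbb P(T_Y)$ be their total dual VMRTs. Because the VMRT at a general point is not dual defective, these are irreducible divisors, and by the standard computation their classes have the form $[\check{\mathcal C}_X]\equiv a_X\zeta_X-b_X\pi^*H_X$, $[\check{\mathcal C}_Y]\equiv a_Y\zeta_Y-b_Y\pi^*H_Y$ with $a_\bullet,b_\bullet\in\mathbb Z_{>0}$, where $\zeta_\bullet:=c_1(\mathcal O_{\mathbb P(T_\bullet)}(1))$; here the positivity of $b_\bullet$ reflects non-dual-defectiveness and the base not being a projective space. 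Since $d\ge2$, building on the Hwang--Mok theory of minimal rational curves under finite morphisms (and Cartan--Fubini-type rigidity, where non-dual-defectiveness is essential), one shows that $df$ — an isomorphism over $X\setminus R$ — carries the VMRT of $X$ into that of $Y$; dualising, the dominant rational map $\phi\colon\mathbb P(T_X)\dashrightarrow\mathbb P(T_Y)$ it induces over $f$ sends $\check{\mathcal C}_X$ onto $\check{\mathcal C}_Y$. Comparing the two divisors, with $\phi^*\zeta_Y\equiv\zeta_X$ and $\phi^*\pi^*H_Y\equiv\delta\,\pi^*H_X$ each modulo an effective divisor supported over $R$, produces a relation in $N^1(\mathbb P(T_X))$ expressing $[\check{\mathcal C}_X]$ through $\zeta_X$, $\pi^*H_X$, $\delta$ and an effective divisor supported over $R$.

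\smallskip
\textit{The contradiction.} Through a general minimal rational curve $C\subset X$ the bundle $T_X|_C$ carries a trivial quotient line bundle (this uses that $C$ is not a line of a projective space); the corresponding section of $\mathbb P(T_X)|_C$ gives a curve class $\gamma$ with $\zeta_X\cdot\gamma=0$ and $\pi^*H_X\cdot\gamma=H_X\cdot C>0$, and, as $C$ and the trivial summand vary, such $\gamma$'s sweep out exactly $\check{\mathcal C}_X$ — they trace the contact locus of the VMRT along $C$. Hence $\zeta_X|_{\check{\mathcal C}_X}$ cannot be big, while $\zeta_X$ is big on $\mathbb P(T_X)$; intersecting the class relation above with $\gamma$, using the analogous family on the $Y$-side and the inequality $\delta i_Y>i_X$, this interplay forces the effective divisor supported over $R$ to be numerically trivial, hence $R\sim 0$ — contradicting the reduction step — unless $Y\cong\mathbb P^n$. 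Therefore $f$ is an isomorphism unless $Y$ is a projective space. Finally, a Fano manifold whose VMRT at a general point is not dual defective is not $\mathbb P^n$, so any surjective endomorphism of such an $X$ is an isomorphism; and if $X\cong\mathbb P^n$ there is nothing to prove.

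\smallskip
\textit{Main obstacle.} The heart of the proof is the VMRT comparison: making rigorous, via the Hwang--Mok theory of minimal rational curves under finite morphisms, that $df$ transports the total dual VMRT of $X$ onto that of $Y$ up to a controlled ramification correction, and pinning down the resulting class relation in $N^1(\mathbb P(T_X))$. This is exactly where non-dual-defectiveness is indispensable — it turns the total dual VMRTs into honest divisors and enables the Cartan--Fubini-type rigidity — and, together with the positivity bookkeeping of the last step (balancing the failure of bigness of $\zeta_X|_{\check{\mathcal C}_X}$ against the ramification term), it carries the real weight of the argument; the reductions in the first step are comparatively formal.
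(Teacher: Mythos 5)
Your reduction step and the final bookkeeping are plausible in outline, but the heart of your argument --- the ``VMRT comparison'' --- is asserted rather than proved, and it is precisely the point where the bigness hypothesis must enter and where your proposal does not use it. There is no result in the Hwang--Mok theory saying that an arbitrary finite surjective morphism $f\colon X\to Y$ between Fano manifolds of Picard number 1 carries the VMRT of $X$ into the VMRT of $Y$ (equivalently, maps general minimal rational curves of $X$ to minimal rational curves of $Y$); Cartan--Fubini type extension theorems take a VMRT-preserving map as \emph{input}, they do not produce one. If such a transfer were available for free, the theorem would follow immediately from the ramification-divisor computation ($-K_X\cdot\ell=-K_Y\cdot f_*\ell$ contradicts $K_X=f^*K_Y+R$ with $R$ ample), with no need for bigness of $T_X$ at all --- indeed Conjecture \ref{main-conj-pn} in the non-dual-defective case would follow --- which should signal that the step cannot be waved through. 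In your write-up the bigness of $T_X$ is only used to deduce bigness of $T_Y$, which plays no role afterwards, so the one hypothesis that carries the theorem is never genuinely exploited.

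The paper's route is different and is worth comparing. It does \emph{not} show that $df$ preserves VMRTs directly; it shows the weaker statement that the induced rational map $\mathbb{P}(T_X)\dashrightarrow\mathbb{P}(T_Y)$ sends the \emph{total dual} VMRT $D_X$ onto $D_Y$. This is where bigness enters: by \cite[Theorem 3.4]{FL22}, bigness of $T_X$ gives $D_X\sim a\xi-b\tau^*H_X$ with $a,b>0$ and $[D_X]$ extremal in the pseudo-effective cone; pushing the section of $\Sym^aT_X(-bH_X)$ through the injection $T_X\hookrightarrow f^*T_Y$ shows the proper transform $D_X'$ in $\mathbb{P}(f^*T_Y)$ has class $a\widetilde\eta-(b+m)\varphi^*H_X$, and since $\widetilde f^*D_Y$ is covered by curves with $\widetilde\eta$-degree $0$, negativity ($b>0$) forces $\widetilde f^*D_Y\subseteq D_X'$, hence equality by irreducibility. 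Converting this dual-VMRT correspondence into a correspondence of minimal rational curves is itself nontrivial: the paper does it (Theorem \ref{thm-m-to-m}) via the symplectic form on $T^*Y$, the coisotropy of the total dual VMRTs and their null foliations \cite[Proposition 2.4]{Hwa15}, whose leaves are exactly the minimal sections; only then does the ramification contradiction close the argument. Your final paragraph, which is supposed to extract ``$R\sim0$'' from an unspecified class relation modulo ``an effective divisor supported over $R$,'' never performs this conversion or the computation, so as written the proposal has two gaps: the unjustified VMRT transfer and the missing mechanism (null foliation/coisotropic argument, or an actual numerical derivation) that turns a divisorial correspondence in $\mathbb{P}(T_X)$ into a statement about curves on $X$.
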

Let \(Z\subseteq\mathbb{P}^N=\mathbb{P}(V)\) be a projective variety. The closure of the set of all tangent hyperplanes of \(Z\) is called the \textit{dual variety} \(\check{Z}\subseteq\mathbb{P}^N=\mathbb{P}(V^*)\), where \(V^*\) is the dual vector space of \(V\).
We say that \(Z\) is \textit{dual defective}, if \(N-1-\dim(\check{Z})>0\) holds (cf.~\cite{Tev05}). 
Indeed, a projective variety is dual defective only in very special cases and our non-dual defective assumption in Theorem  \ref{thm-main} is not very restrictive, for example, it is known that any Fano threefold of Picard number 1 has a dominating family of minimal rational curves whose VMRT along a general point is not dual defective (see \cite{HM03}).
Various examples satisfying the assumption in Theorem \ref{thm-main} can be found; see \cite[Introduction]{Liu23} for a nice summary of recent progress (cf.~Section \ref{s:ex}).

In \cite{HM99}, Hwang and Mok gave an affirmative answer to a question raised by Lazarsfeld (see \cite{Laz84}), asserting that any surjective morphism from a rational homogeneous space of Picard number 1 to a smooth projective variety which is different from the projective space, has to be an isomorphism. 
Inspired by their work and the fact that every rational homogeneous space has a big tangent bundle (cf. Example \ref{exa-rational-homogeneous}), as an extension of Lazarsfeld's question, it is natural to ask the following question.

\begin{question}\label{ques-laz}
Let $f\colon X\to Y$ be a finite surjective morphism between Fano manifolds of Picard number 1. Suppose that  $T_X$ is big and $Y$ is not isomorphic to a projective space. Is $f$ an isomorphism?
\end{question}

Note that, in addition to rational homogeneous spaces,  Question \ref{ques-laz} also has a positive answer if $\dim X\leq 3$ by \cite[Corollary 1.2]{HL23}, \cite{Laz84} and Theorem \ref{thm-main}.

To establish Theorem \ref{thm-main}, we give the following  Theorem \ref{thm-m-to-m}, which possesses its own independent interest. 
Theorem \ref{thm-m-to-m} offers a potential approach to address Conjecture \ref{main-conj-pn} when the total dual VMRT is a hypersurface in the projectivized tangent bundle. 
For precise definitions, we direct the reader to \cite[Section 2.B.]{HLS22} and the references therein (cf.~Section \ref{s:pf-main}).

\begin{theorem}\label{thm-m-to-m}
Let \(f\colon X\to Y\) be a finite surjective  morphism between Fano manifolds of Picard number 1.
Let \(\mathcal{K}\) and \(\mathcal{G}\) be the dominating families of minimal rational curves on \(X\) and \(Y\) whose VMRTs along a general point are not dual defective.
Suppose that \(Y\) is not isomorphic to a projective space, and via the induced rational map \(\mathbb{P}(T_X)\dashrightarrow \mathbb{P}(T_Y)\), the proper transform of the total dual VMRT of \(\mathcal{K}\) is the total dual VMRT of \(\mathcal{G}\). 
Then the \(f\)-image of a general minimal rational curve of \(X\) is a general minimal rational curve of \(Y\). 
If moreover the VMRT along a general point  of \(X\) is of positive dimensional, then \(f\) is an isomorphism. 
\end{theorem}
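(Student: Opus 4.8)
The plan is to translate the hypothesis on total dual VMRTs into a pointwise statement about $df$, to upgrade this by projective duality to the assertion that $f$ is a local isomorphism of VMRT-structures, and then to conclude via a Cartan--Fubini type extension theorem. Write $n=\dim X=\dim Y$. Since we work in characteristic zero, $f$ is generically étale, so $df\colon T_X\to f^{*}T_Y$ is an isomorphism of vector bundles over the dense open set $X\smallsetminus R$, where $R\subseteq X$ is the (effective, possibly zero) ramification divisor. Hence $df$ induces a rational map $\Psi\colon\mathbb{P}(T_X)\dashrightarrow\mathbb{P}(T_Y)$ --- the induced rational map of the statement --- which is a morphism over $X\smallsetminus R$ with $\pi_Y\circ\Psi=f\circ\pi_X$ there, and which over a general point $x\in X$ restricts to the projective linear isomorphism $\Psi_x\colon\mathbb{P}(T_{X,x})\xrightarrow{\ \sim\ }\mathbb{P}(T_{Y,f(x)})$ induced by $df_x$. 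By non-dual-defectiveness, the total dual VMRTs $\check{\mathcal C}_{\mathcal K}\subseteq\mathbb{P}(T_X)$ and $\check{\mathcal C}_{\mathcal G}\subseteq\mathbb{P}(T_Y)$ are divisors whose fibres over a general point are the hypersurfaces $\check{\mathcal C}_x\subseteq\mathbb{P}(T_{X,x})$, $\check{\mathcal C}_y\subseteq\mathbb{P}(T_{Y,y})$ projectively dual to the VMRTs $\mathcal C_x$, $\mathcal C_y$ (irreducible for general $x$, $y$, as in the standing setup). Since $\check{\mathcal C}_{\mathcal K}$ dominates $X$ while the indeterminacy of $\Psi$ sits over $R$, the hypothesis that $\Psi$ sends $\check{\mathcal C}_{\mathcal K}$ to $\check{\mathcal C}_{\mathcal G}$ gives, for general $x$, the inclusion $\Psi_x(\check{\mathcal C}_x)\subseteq\check{\mathcal C}_{f(x)}$.

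The crucial step is the upgrade. For general $x$ (put $y=f(x)$), both $\check{\mathcal C}_x$ and $\Psi_x^{-1}(\check{\mathcal C}_y)$ are irreducible hypersurfaces of $\mathbb{P}(T_{X,x})\cong\mathbb{P}^{n-1}$ with the former contained in the latter, hence they coincide: $\check{\mathcal C}_x=\Psi_x^{-1}(\check{\mathcal C}_y)$. Dualising this equality, and using that projective duality commutes with projective linear isomorphisms together with the reflexivity (biduality) theorem $\check{\check Z}=Z$ in characteristic zero, one deduces that $df_x$ carries the VMRT $\mathcal C_x$ isomorphically onto $\mathcal C_{f(x)}$. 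Thus $f$ is a local isomorphism of VMRT-structures at a general point; in particular $\dim\mathcal C_x=\dim\mathcal C_{f(x)}$, so $\mathcal K$ and $\mathcal G$ have the same anticanonical degree --- a first manifestation of the rigidity.

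To conclude, use that $f$ is finite and étale over $X\smallsetminus R$ to pick a connected Euclidean-open subset $U$ of the dense open locus where $df$ is an isomorphism carrying $\mathcal C_x$ onto $\mathcal C_{f(x)}$ and on which $f$ is injective; then $f|_U$ is a biholomorphism onto an open subset of $Y$ whose differential preserves the VMRT structures. By the Cartan--Fubini type extension theorem of Hwang--Mok for Fano manifolds of Picard number one --- and this is exactly where $Y\not\cong\mathbb{P}^n$ and non-dual-defectiveness enter, guaranteeing that $\mathcal C_{f(x)}$ is a proper, suitably non-degenerate subvariety of $\mathbb{P}(\Omega_{Y,f(x)})$ --- the map $f|_U$ extends to a biholomorphism $F\colon X\to Y$. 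By Chow's theorem $F$ is a morphism, hence an isomorphism, and since $F=f$ on the nonempty open set $U$ of the connected $X$, we get $F=f$. Therefore $f$ is an isomorphism.

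The step I expect to be the main obstacle is this last invocation of Cartan--Fubini: one must carefully check its hypotheses for the VMRT structures at hand, treat separately the case of a finite VMRT (where the classical form of the theorem does not directly apply), and make sure the extension recovers $f$ itself rather than merely producing some biholomorphism $X\to Y$. A secondary, more routine but still delicate, point is the passage in the crucial step from the set-theoretic inclusion of total dual VMRTs to the fibrewise equality $\check{\mathcal C}_x=\Psi_x^{-1}(\check{\mathcal C}_{f(x)})$, which relies on the irreducibility and the exact (codimension-one) dimension of the general fibres of the total dual VMRTs.
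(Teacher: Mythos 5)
Your route is genuinely different from the paper's (which identifies the hypothesis with a statement about the null foliation of the symplectic form on the total dual VMRT, uses Hwang's coisotropicity result to see that leaves are minimal sections, deduces that \(f\) maps general minimal rational curves of \(\mathcal{K}\) birationally onto curves of \(\mathcal{G}\), and then gets a contradiction from \(K_X\cdot\ell=K_Y\cdot f_*\ell\) against the ampleness of the ramification divisor), but as written it has a genuine gap. The theorem as stated includes the case where the VMRT at a general point is \(0\)-dimensional: a finite set of points is never dual defective, since its dual variety is a finite union of hyperplanes, hence a hypersurface. This case is not an exotic corner — it is precisely the case used later in the paper (index-one complete intersections covered by conics, Lemma \ref{l:trivial-VMRT}, feeding into Theorem \ref{thm-complete-intersection}). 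The Cartan--Fubini extension theorem of Hwang--Mok requires the VMRT at a general point to be positive-dimensional, and the \(p=0\) situation (minimal rational curves with trivial normal bundle) is known to need entirely different arguments (cf.\ \cite{HM03}); you flag this yourself as "the main obstacle" but offer no treatment, so the proposal does not prove the stated theorem. By contrast, the paper's symplectic/null-foliation argument is insensitive to \(\dim\mathcal{C}_x\) and handles \(p=0\) uniformly.

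Two further points need repair even in the positive-dimensional case. First, your "crucial step" from the inclusion \(\Psi_x(\check{\mathcal C}_x)\subseteq\check{\mathcal C}_{f(x)}\) to the fibrewise equality uses irreducibility of \(\check{\mathcal C}_{f(x)}\) (equivalently, essentially, of \(\mathcal C_{f(x)}\)), which is not among the hypotheses: the paper only assumes \(\mathcal{G}\) irreducible, and \(\mathcal{K}^n_x\) (hence the VMRT) may well be a finite union of components, so the general fibre of the irreducible divisor \(D_Y\) can be reducible. Dominance of \(D_X\dashrightarrow D_Y\) only tells you that the union of \(\Psi_x(\check{\mathcal C}_x)\) over all \(x\in f^{-1}(y)\) covers \(\check{\mathcal C}_y\), not that a single \(x\) does; without equality, biduality only yields that \(df_x\) maps \(\mathcal C_x\) onto a union of components of \(\mathcal C_{f(x)}\), which is weaker than the VMRT-preservation Cartan--Fubini asks for. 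Second, the remaining hypothesis of the extension theorem (generic finiteness of the Gauss map of the VMRT) is asserted but not verified; it may be deducible from non-dual-defectiveness or from general results of Hwang--Mok, but the verification belongs to the proof. If you want to salvage your approach, you would need to supply a separate argument for \(p=0\) and an argument handling reducible VMRTs, at which point the paper's route — where the hypothesis on total dual VMRTs directly produces the map on minimal sections and the ramification-divisor count finishes in two lines — is both shorter and more robust.
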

Before proceeding to the applications, we give some remarks on Theorem \ref{thm-m-to-m}.
\begin{remark}\label{rmk-thm-preserve-vmrt}
\begin{enumerate}
\item In Theorem \ref{thm-m-to-m}, the assumption that the induced rational map preserves the total dual VMRTs is satisfied if the tangent bundle $T_X$ is big (see Proof of Theorem \ref{thm-main}).
\item To prove the first half of the statement, we show that $f$ maps general minimal rational curves to general minimal rational curves and there is an induced generically finite dominant rational map \(\mathcal{K}\dashrightarrow\mathcal{G}\); in particular, the dimensions of VMRTs along a general point are the same. 
\item To prove that \(f\) is indeed an isomorphism, we need that a general minimal rational curve is even birational to its image, which is the case if the VMRT along a general point has positive dimension (see Proof of Theorem \ref{thm-m-to-m}). 
This conclusion would fail if one drops the assumption on the dimension of the VMRT, see e.g. \cite[P. 86-87, Example and the paragraph after it]{IP99}.
\end{enumerate} 
\end{remark}

As one might expect, there are several applications of our main results. 
Regarding positivity, Theorem \ref{thm-main} provides a criterion for proving the non-bigness of the tangent bundles of certain Fano manifolds via finite non-trivial covers. 
In what follows, combining Theorem \ref{thm-main} with other techniques, we explore the bigness of the tangent bundles of complete intersections, del Pezzo manifolds and Mukai manifolds, yielding the results presented in Theorems \ref{thm-complete-intersection}, \ref{thm-del-Pezzo} and Proposition \ref{prop-Mukai-nonbig} below.
Let us begin with smooth complete intersections in a projective space.

\begin{theorem}\label{thm-complete-intersection}
Let \(X\) be a non-linear smooth complete intersection of multi-degree \(\underline{d}=(d_1,\cdots,d_k)\) in a projective space.
Then the tangent bundle \(T_X\) is big if and only if \(X\) is a quadric hypersurface. 
Moreover, suppose that \(X\) is very general in its deformation family. 
Then \(T_X\) is pseudo-effective if and only if \(\underline{d}=(2)\) or \(\underline{d}=(2,2)\).
\end{theorem}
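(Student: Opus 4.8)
A smooth quadric \(Q^n\) is a rational homogeneous space, so \(T_{Q^n}\) is big by Example \ref{exa-rational-homogeneous}; in particular \(T_X\) is big (hence pseudo-effective) when \(\underline{d}=(2)\). For the converse, the cases \(\dim X\le 2\) are classical: a non-linear complete-intersection curve is either \(Q^1=\mathbb{P}^1\) or has genus \(\ge 1\), so that \(T_X\) is anti-ample and not big; a non-linear complete-intersection surface is either \(Q^2\) or has non-big tangent bundle by the classification of projective surfaces with big tangent bundle. So assume \(n:=\dim X\ge 3\); then \(X\) has Picard number \(1\) by Lefschetz, its Fano index is \(p=N+1-\sum_i d_i\), the coindex equals \(\sum_i(d_i-1)\), and we may assume \(X\) is neither \(\mathbb{P}^n\) nor \(Q^n\), i.e.\ the coindex is \(\ge 2\). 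The whole argument will then be routed through the geometry of the projectivised tangent bundle \(\mathbb{P}(T_X)\) and its total dual VMRT, in the sense of \cite[Section~2.B]{HLS22}.

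\textbf{Set-up and criterion.} Let \(\pi\colon\mathbb{P}(T_X)\to X\), \(\xi=\mathcal{O}_{\mathbb{P}(T_X)}(1)\), and \(F=\pi^*\mathcal{O}_X(1)\); since \(\mathrm{Pic}(\mathbb{P}(T_X))\otimes\mathbb{Q}=\mathbb{Q}\xi\oplus\mathbb{Q}F\), the bundle \(T_X\) is big (resp.\ pseudo-effective) if and only if \(\xi\) is. Let \(\mathcal{K}\) be the dominating family of minimal rational curves: lines when \(p\ge 2\), conics when \(p=1\). For \(p\ge 2\) the VMRT \(\mathcal{C}_x\subseteq\mathbb{P}(\Omega_{X,x})\) at a general point is a smooth complete intersection cut out by the equations of degrees \(2,3,\dots,d_i\) for each \(i\) with \(d_i\ge 2\). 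Since a smooth complete intersection is dual defective only if it is a linear space or a quadric, and \(X\ne\mathbb{P}^n,Q^n\), the VMRT is not dual defective, so the total dual VMRT \(\check{\mathcal{C}}\subseteq\mathbb{P}(T_X)\) is a prime divisor, with numerical class \(\check{\mathcal{C}}\equiv a\,\xi+c\,F\) where \(a=\deg\check{\mathcal{C}}_x>0\). Since the total dual VMRT of a Fano manifold of Picard number one with non-dual-defective VMRT is not big (see \cite{HL23,HLS22}), \(\check{\mathcal{C}}\) spans the extremal ray of \(\overline{\mathrm{Eff}}(\mathbb{P}(T_X))\) distinct from \(\mathbb{R}_{\ge 0}F\); consequently \(\xi\) is big \(\iff c<0\), and \(\xi\) is pseudo-effective \(\iff c\le 0\).

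\textbf{Sign of \(c\) and conclusion.} It remains to compute \(c=c(\underline{d})\) — for instance by pairing \(\check{\mathcal{C}}\) with a general fibre of \(\pi\) (this computes \(a\)) and with the lift of a minimal rational curve \(\ell\subseteq X\), or via the standard formula for the class of a total dual VMRT in terms of \(p\) and the \(d_i\). One should find \(c<0\) exactly for \(\underline{d}=(2)\), \(c=0\) exactly for \(\underline{d}=(2,2)\), and \(c>0\) for every other non-linear non-quadric multidegree. The coindex-two case \(\underline{d}=(3)\) and the coindex-three cases \(\underline{d}\in\{(4),(2,3),(2,2,2)\}\) can alternatively be imported from Theorem \ref{thm-del-Pezzo} and Proposition \ref{prop-Mukai-nonbig}, and the remaining coindex \(\ge 4\) cases (in particular the index-one ones) follow from the same analysis using the smallness of \(p\). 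This gives the bigness statement for all non-linear smooth complete intersections. For the pseudo-effectivity statement, when \(X\) is very general one uses in addition that \(\mathbb{P}(T_X)\) carries no unexpected effective divisor — a Noether--Lefschetz-type monodromy statement — which is what forces \(c>0\) to make \(\xi\) non-pseudo-effective rather than only non-big.

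\textbf{Main obstacle.} The crux is the uniform determination of the sign of \(c(\underline{d})\), i.e.\ of the numerical class \(\check{\mathcal{C}}\equiv a\xi+cF\) of the total dual VMRT, over all multidegrees and all Fano indices (including the index-one case, where the minimal rational curves are conics and the VMRT is no longer a complete intersection in an evident way); a secondary difficulty is the Noether--Lefschetz input needed for the ``only if'' half of the pseudo-effectivity assertion at the very general member. By contrast the bigness half is robust, and the del Pezzo and Mukai subcases may be quoted wholesale from Theorem \ref{thm-del-Pezzo} and Proposition \ref{prop-Mukai-nonbig}.
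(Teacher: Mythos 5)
Your reduction and the ``if'' directions are fine, but the proposal defers exactly the step where all the difficulty lives and never supplies it: the determination of the sign of \(c\) in \(\check{\mathcal{C}}\equiv a\xi+cF\) uniformly in the multidegree. No formula for the class of the total dual VMRT of a complete intersection is available to quote, and in the Fano-index-one range the minimal rational curves are conics with \(0\)-dimensional VMRT, so even setting up the computation is nontrivial; writing ``one should find \(c<0\) exactly for \((2)\), \(c=0\) exactly for \((2,2)\), \(c>0\) otherwise'' is a restatement of the theorem, not a proof. The paper avoids this computation altogether: it degenerates \(X\) to a special member \(X'\) obtained by intersecting the cone over a complete intersection \(Y\) of multidegree \((d_1,\dots,d_{k-1})\) with a general hypersurface of degree \(d_k\), so that \(X'\to Y\) is a finite cover of degree \(d_k\geq 2\); Lemma \ref{l:trivial-VMRT} plus Theorem \ref{thm-main} then force \(T_{X'}\) to be non-big, and \cite[Proposition 4.16]{Liu23} transfers non-bigness across the deformation family. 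For pseudo-effectivity it iterates this cover down to a hypersurface of degree \(d_1\geq 3\) (quoting \cite[Theorem 1.4]{HLS22} and Lemma \ref{positivity-finite-morphism}) or, when all \(d_i=2\) and \(k\geq 3\), to a finite cover of a \((2,2)\), where the appendix result Theorem \ref{thm-intersection-quadrics} does the real work; very-generality enters only through semicontinuity of sections of \(\Sym^m T_X\) twisted by a small ample, not through any Noether--Lefschetz statement (note \(\rho(\mathbb{P}(T_X))=2\) always, so ``no unexpected effective divisor'' is not the issue).

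Two further problems with your criterion as stated. First, the equivalence ``\(\xi\) pseudo-effective \(\iff c\leq 0\)'' is unjustified in the forward direction: it needs \(\check{\mathcal{C}}\) to span the non-trivial boundary ray of \(\mathrm{PE}(\mathbb{P}(T_X))\), and the extremality statement you invoke (\cite[Theorem 3.4]{FL22}, as used in the paper's proof of Theorem \ref{thm-main}) is established under the hypothesis that \(\xi\) is big; \(c>0\) alone does not preclude \(\xi\) from being pseudo-effective, so even a complete sign computation would not finish the pseudo-effectivity half by this route. Second, importing the coindex-three cases \((2,3)\) and \((2,2,2)\) from Proposition \ref{prop-Mukai-nonbig} is circular, since that proposition's proof for genus \(4\) and \(5\) quotes the very theorem you are proving.
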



Recall that when \(X\) is a smooth hypersurface of degree \(d\), the joint paper of H\"oring, Liu and the first author shows that the tangent bundle \(T_X\) is pseudo-effective if and only if \(d\leq 2\) (see \cite[Theorem 1.4]{HLS22}); when \(X\) is a smooth complete intersection of two quadrics, \cite{BEHLV23} verifies that the tangent bundle \(T_X\) is \(\mathbb{Q}\)-effective but not big; when $X$ is a smooth Fano complete intersection of dimension at least 3 and of Fano index 1 or 2, \cite[Theorem 1.1]{HL23} implies that $T_X$ is not big. 
We refer the reader to Theorem \ref{thm-intersection-quadrics} written by Liu, which shows that a general smooth finite cover over a general complete intersection of two quadrics cannot have a pseudo-effective tangent bundle.

Now, let's consider del Pezzo manifolds. 
A smooth Fano variety \(X\) of dimension \(n\) is called a \textit{del Pezzo} manifold, if there exists an integral ample divisor \(H_X\) such that the anti-canonical divisor \(-K_X\) is linearly equivalent to \((n-1)H_X\); the self-intersection \(d(X)\coloneqq H_X^n\) is called the \textit{degree} of \(X\).
Applying Theorem \ref{thm-main}, we prove the non-bigness of del Pezzo manifolds of lower degree, leading to the following result.

\begin{theorem}\label{thm-del-Pezzo}
    Let $X$ be a del Pezzo manifold of degree $d$. 
    Then the tangent bundle $T_X$ is big if and only if $d\geq 5$.
\end{theorem}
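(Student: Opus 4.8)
The plan is to split the statement into two directions: bigness for $d \geq 5$, and non-bigness for $d \leq 4$. For the positive direction, one invokes the classification of del Pezzo manifolds: in degree $d \geq 5$ and Picard number $1$ (with $\dim X \geq 3$), these are precisely $\mathbb{P}^3$ with the quadratic polarization (degree $8$, treated separately), the smooth quadric $Q^n$ and the del Pezzo quintic $V_5^n$ (degree $5$), as well as the low-dimensional surface/threefold anchors; moreover degrees $6$ and $7$ in Picard number $1$ are ruled out or reduce to known cases. Each of these is either a rational homogeneous space or has a known explicit big tangent bundle: the quadric $Q^n$ is homogeneous so $T_{Q^n}$ is big by Example \ref{exa-rational-homogeneous}, $V_5$ is almost homogeneous (it carries an action of $\mathrm{PGL}_2$ or $\mathrm{SL}_2$ with an open orbit in low dimension, and bigness of its tangent bundle has been established in the literature, e.g.\ via the work on del Pezzo manifolds of Fano index $\dim X - 1$), and $\mathbb{P}^n$ obviously has big $T_X$. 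So the $d \geq 5$ direction is essentially a bookkeeping exercise using existing classification and existing bigness results.

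The interesting direction is non-bigness for $d \in \{1,2,3,4\}$. Here the strategy is to exhibit, for each such $X$, a non-trivial finite surjective morphism $f \colon \widetilde{X} \to X$ from a Fano manifold $\widetilde{X}$ of Picard number $1$ whose VMRT at a general point is not dual defective, and such that $\widetilde{X}$ is not a projective space — then Theorem \ref{thm-main} (contrapositive) forces $T_{\widetilde{X}}$ to be \emph{not} big; but one must instead arrange the cover so that bigness of $T_X$ would pull back or otherwise propagate, giving a contradiction. More precisely, the cleanest route is: if $T_X$ were big, consider a suitable cyclic cover or double cover $f \colon \widetilde{X} \to X$ branched along a smooth member of an appropriate linear system (for degree $d=1,2$ these del Pezzo manifolds are themselves weighted hypersurfaces or double covers, so natural covers are available; for $d=3$, a cubic, and $d=4$, an intersection of two quadrics, one uses Theorem \ref{thm-complete-intersection} directly for the intersection of quadrics case and a separate cover argument for the cubic). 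Then apply Theorem \ref{thm-main} to $f$: since $Y = X$ is not a projective space and $\widetilde{X}$ has non-dual-defective VMRT, $f$ must be an isomorphism — contradicting that $f$ is a genuine non-trivial cover. Hence $T_X$ cannot be big. Alternatively, and more robustly, one uses the fact (from \cite{HL23} or \cite{HLS22}) that Fano manifolds of Fano index $1$ or $2$ and dimension $\geq 3$ have non-big tangent bundle: del Pezzo manifolds of degree $d \leq 4$ have Fano index $\dim X - 1$, which is $\leq 2$ exactly when $\dim X \leq 3$, so for $\dim X \leq 3$ one is done immediately, and for higher dimensions one reduces to these low-dimensional cases via the structure of del Pezzo manifolds (taking general hyperplane sections, which are again del Pezzo of the same degree) combined with restriction behaviour of bigness of tangent bundles under taking ample divisors.

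The main obstacle I expect is the non-bigness direction in intermediate degrees, specifically ensuring that a suitable finite cover $\widetilde{X} \to X$ actually exists with all the hypotheses of Theorem \ref{thm-main} satisfied — in particular that $\widetilde{X}$ has Picard number $1$ (which can fail for covers branched along divisors that are not ample enough, or split the Picard group), is Fano, and has non-dual-defective VMRT at a general point. For $d=4$ (complete intersection of two quadrics) this is handled directly by Theorem \ref{thm-complete-intersection}, so the real work is concentrated on $d = 1, 2, 3$, where one must either produce the cover by hand using the explicit projective models (del Pezzo manifolds of small degree are double covers of $\mathbb{P}^n$ or of a quadric, or weighted complete intersections, giving natural cyclic covers) or argue non-bigness via the hyperplane-section reduction to the threefold case, for which Conjecture \ref{main-conj-pn} is already known (\cite{SZ22}, \cite{AKP08}) and hence $T_X$ non-big would have to be established by the cited positivity results. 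Reconciling the VMRT non-dual-defectiveness hypothesis for these covers — del Pezzo manifolds and their finite covers have well-understood VMRTs (lines through a general point), and dual defectivity of such VMRTs is a strong constraint one expects to check using \cite{Liu23} — will require a short case analysis, but no essential difficulty beyond bookkeeping.
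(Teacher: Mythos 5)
Your plan for $d\leq 4$ has a genuine directional error in the way you invoke Theorem \ref{thm-main}. That theorem's bigness hypothesis is on the tangent bundle of the \emph{source} of the finite morphism, so taking a nontrivial cover $f\colon\widetilde X\to X$ and assuming $T_X$ (the target) big yields no contradiction: bigness does not propagate from $X$ to the subsheaf $T_{\widetilde X}\subseteq f^*T_X$ (only \emph{non}-bigness passes from target to source, as in Lemma \ref{positivity-finite-morphism}), and you yourself flag the need for this propagation without supplying it. The natural structural maps you point to also go the wrong way: for $d=2$ the double cover is $X\to\mathbb{P}^n$, whose target is a projective space, so Theorem \ref{thm-main} is vacuous there. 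What the paper actually does for $d=2$ is exhibit a \emph{special} member of the deformation family, namely $x_0^2+x_1^4+\cdots+x_{n+1}^4=0$ in $\mathbb{P}(2,1,\dots,1)$, which admits a finite morphism \emph{onto the quadric} $Q^n$ (not a projective space); Theorem \ref{thm-main} applied with the del Pezzo as source then kills bigness for that member, and the conclusion is transferred to all members by \cite[Proposition 4.16]{Liu23}, using that the VMRT is a smooth $(3,4)$ complete intersection (by \cite{HK13}) and hence not dual defective. Both this deformation step and the VMRT verification are missing from your outline. Your fallback reduction via hyperplane sections is also unsound: non-bigness of $T_H$ for an ample section $H\subset X$ says nothing about $T_X$ (compare a high-degree hypersurface inside $\mathbb{P}^n$), so you cannot reduce $\dim X\geq 4$ to the threefold case that way.

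For $d=1$ the gap is larger still: the paper does not use Theorem \ref{thm-main} at all. It constructs a finite cover $g\colon Y\to X$ from the Fermat sextic hypersurface onto a special degree-one del Pezzo, derives an injection $0\to g^*T_X\to T_Y(3)$, and concludes $H^0(X,\Sym^r T_X)\hookrightarrow H^0(Y,\Sym^r(T_Y(3)))=0$ directly from \cite[Theorem 1.3]{HLS22}; and to run the deformation argument it needs the new Proposition \ref{p:dual-defect} to show that the possibly singular VMRT (the image under projection of a $(4,5,6)$ weighted complete intersection, via \cite{HK15}) is not dual defective. None of these ingredients appear in your proposal. (Minor points: $d=3$ needs no cover argument, being a cubic hypersurface covered by \cite[Theorem 1.4]{HLS22}, and the theorem is not restricted to Picard number one, so the $d\geq 5$ bookkeeping should simply cite \cite[Proposition 5.16, Theorem 5.4]{HLS22} and \cite{Liu23} rather than a Picard-number-one classification.)
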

The essential cases we deal with are when $d=1,2$. Indeed, if one of the following conditions holds: (1) \(\dim(X)\leq 3\); (2) \(d=3\); or (3) \(d\geq 6\), Theorem \ref{thm-del-Pezzo} has been proved in  \cite[Theorems 1.2 and 1.5]{HLS22} (cf.~\cite{HL23}); if \(d=5\) and \(\dim(X)\geq 4\), it has been proved in \cite[Proposition 3.15 and Example 4.5]{Liu23}; if \(d=4\), it has been proved in \cite{BEHLV23}.
However, the pseudo-effectiveness of the tangent bundle in the case \(d\leq 2\) is still unknown. 

In addition to del Pezzo manifolds, we also study the bigness of tangent bundle of  \textit{Mukai manifolds}, i.e., a smooth Fano variety \(X\) of dimension \(n\) whose anti-canonical divisor \(-K_X\) is linearly equivalent to \((n-2)H_X\) for some integral ample divisor \(H_X\); the \textit{genus} \(g(X)\) is defined as \(\frac{1}{2}H_X^n+1\). 
It is known that if \(X\) is of Picard number 1, then \(2\leq g(X)\leq 12\) and \(g(X)\neq 11\) hold.
\begin{proposition}\label{prop-Mukai-nonbig}
Let $X$ be a Mukai manifold of Picard number 1 and genus $g(X)$. 
If $g(X)\leq 5$, or \(g(X)=6\) and \(\dim(X)\neq 6\), then the tangent bundle $T_X$ is not big. 
\end{proposition}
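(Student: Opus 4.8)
The plan is to run through Mukai's classification of Fano manifolds of Picard number $1$ and coindex $3$, splitting into the structural types of $X$ according to the genus (throughout, $n=\dim X$), and in each case to invoke either Theorem \ref{thm-complete-intersection} or Theorem \ref{thm-main}, or to run a deformation argument. When $g(X)=4$, the manifold $X$ is a smooth complete intersection of a quadric and a cubic in a projective space; when $g(X)=5$, it is a smooth complete intersection of three quadrics; and when $g(X)=3$, one of the two possibilities is that $X$ is a smooth quartic hypersurface. In each of these cases $X$ is a non-linear smooth complete intersection that is not a quadric hypersurface, so Theorem \ref{thm-complete-intersection} gives at once that $T_X$ is not big.

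The other genus-$3$ possibility is that $X$ is a double cover $f\colon X\to Q^n$ of a smooth quadric ($n\ge 3$) branched along a quartic section; and among the genus-$6$ manifolds — which by the structure theory of Gushel--Mukai varieties all arise from the Grassmannian $G(2,5)$ and satisfy $3\le n\le 6$ — the Gushel type is a double cover $f\colon X\to Y$ of the del Pezzo $n$-fold $Y$ of degree $5$, branched along a quadric section. In either situation $Y$ is a Fano manifold of Picard number $1$, not a projective space, with $T_Y$ big (immediate for $Y=Q^n$, and by Theorem \ref{thm-del-Pezzo} when $Y$ is a del Pezzo manifold of degree $5$); one then checks that the VMRT of $Y$ at a general point is not dual defective (for $Q^n$ it is a smooth quadric; for a del Pezzo $n$-fold of degree $5$ with $n\le 5$ this is in \cite{Liu23}), and that the VMRT of $X$ at a general point is not dual defective. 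Granting these two VMRT verifications, Theorem \ref{thm-main} applies to $f$: if $T_X$ were big, $f$ would be an isomorphism, contradicting $\deg f=2$, so $T_X$ is not big. The hypothesis $\dim X\ne 6$ is exactly what keeps this argument valid in the genus-$6$ case: if $n=6$ the base $Y$ is $G(2,5)$ itself, whose VMRT at a general point is the Segre variety $\mathbb{P}^1\times\mathbb{P}^2\subseteq\mathbb{P}^5$, which \emph{is} dual defective, so Theorem \ref{thm-main} is unavailable.

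A genus-$6$ Mukai $n$-fold with $n\le 5$ that is not of Gushel type (for instance a quadric section of a del Pezzo $(n{+}1)$-fold of degree $5$) carries no obvious finite morphism onto a non-projective-space, but it lies in the same irreducible deformation family as the Gushel-type ones. For a smooth family $\mathcal{X}\to S$ the sheaves $\Sym^k T_{\mathcal{X}/S}$ are locally free and $h^0(\Sym^k T_{X_t})$ is upper semicontinuous in $t$ for each $k$, so bigness of $T_{X_t}$ for a general fiber would force bigness of $T_{X_0}$ for a special (Gushel) fiber; since the latter has just been ruled out, $T_X$ is not big for any member of the family.

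It remains to treat $g(X)=2$, which is the delicate case and the expected main obstacle. Such an $X$ is a double cover of $\mathbb{P}^n$ branched along a smooth sextic, equivalently a smooth sextic hypersurface in $\mathbb{P}(1^{n+1},3)$; its only natural finite morphism is the one onto $\mathbb{P}^n$, precisely the exceptional target excluded in Theorem \ref{thm-main}, so neither that theorem nor a restriction-to-hyperplane-section argument (bigness of the tangent bundle not passing to hyperplane sections in general) is available. Here the plan is to argue directly: from $\pi_*\mathcal{O}_X=\mathcal{O}_{\mathbb{P}^n}\oplus\mathcal{O}_{\mathbb{P}^n}(-3)$ and the inclusion $T_X\hookrightarrow\pi^*T_{\mathbb{P}^n}$ with torsion cokernel supported on the ramification divisor, one estimates $h^0(X,\Sym^k T_X)$ and shows that its growth is strictly sub-maximal, in the same spirit as the treatment of del Pezzo manifolds of degree $\le 2$ elsewhere in this paper. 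This direct estimate — which must genuinely use that the branch locus has degree $6$, since the degree-$2$ analogue $Q^n$ does have big tangent bundle — together with the two VMRT non-dual-defectiveness checks needed in the finite-cover cases, is the principal technical hurdle.
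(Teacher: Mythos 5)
Most of your case division matches the paper: genus $4$ and $5$ (and the quartic half of genus $3$) via Theorem \ref{thm-complete-intersection}; the double cover of $Q^n$ and the Gushel double cover over the quintic del Pezzo via Theorem \ref{thm-main} followed by a deformation/semicontinuity argument (the paper supplies the VMRT non-dual-defectiveness checks you left open by citing \cite{Kim16} for the double cover of the quadric, and \cite[Proposition 3.1]{Ein86}, \cite{Tev05} and \cite[Lemma 4.14]{Liu23} for the Gushel--Mukai case, then concludes with \cite[Proposition 4.16]{Liu23}; it also first reduces to $\dim X\geq 4$ via \cite{HL23}). Incidentally, bigness of $T_Y$ plays no role in Theorem \ref{thm-main}, so your appeal to Theorem \ref{thm-del-Pezzo} for the target is superfluous.

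The genuine gap is the genus $2$ case, which you explicitly do not prove. Your proposed strategy --- bounding $h^0(X,\Sym^k T_X)$ through the inclusion $T_X\hookrightarrow \pi^*T_{\mathbb{P}^n}$ for the double cover $\pi\colon X\to\mathbb{P}^n$ --- goes in the wrong direction: $\pi^*T_{\mathbb{P}^n}$ is big, so this inclusion gives no nontrivial upper bound, and it is not clear how the degree of the branch divisor would ever enter such an estimate. The paper's argument uses a finite cover \emph{onto} $X$ rather than from $X$: writing $X=\{y^2+F_6(x_0,\dots,x_n)=0\}\subseteq\mathbb{P}(3,1,\dots,1)$, it takes the smooth sextic hypersurface $X'=\{Y^6+F_6=0\}\subseteq\mathbb{P}^{n+1}$ and the finite morphism $g\colon X'\to X$, $[Y:y_0:\cdots:y_n]\mapsto[Y^3:y_0:\cdots:y_n]$. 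From $0\to g^*\Omega_X\to\Omega_{X'}$, taking the $(n-1)$-th exterior power and twisting yields an injection $0\to g^*T_X\to T_{X'}(2)$; since $\mathcal{O}_X$ is a direct summand of $g_*\mathcal{O}_{X'}$, one gets $H^0(X,\Sym^r T_X)\hookrightarrow H^0(X',\Sym^r(T_{X'}(2)))$, and the right-hand side vanishes for all $r\geq 1$ by \cite[Theorem 1.3]{HLS22} because $X'$ is a sextic hypersurface. This gives $H^0(X,\Sym^r T_X)=0$, so $T_X$ is not big (in fact not pseudo-effective). Without an argument of this kind (it is the same device the paper uses for degree $1$ del Pezzo manifolds), your treatment of $g(X)=2$ remains a plan rather than a proof, so the proposal as it stands does not establish the proposition.
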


Recently, an interesting result \cite{KT23} due to Kawakami and Totaro showed that if the Fano manifold \(X\) of Picard number 1 admits a non-isomorphic surjective endomorphism, then it satisfies the \textit{Bott vanishing}; in particular, such \(X\) is \textit{locally rigid}, i.e., \(H^1(X,T_X)=0\).
 This fact, along with Theorem \ref{thm-main} and previous results, allows us to affirmatively answer Conjecture \ref{main-conj-pn} for Mukai manifolds and smooth complete intersections in a projective space. This serves as another application of Theorem \ref{thm-main} from the dynamical viewpoint.
\begin{proposition}\label{prop-mukai-endo}
Let \(X\) be a Mukai manifold of Picard number 1 or a non-linear smooth complete intersection of dimension \(\geq 3\).
Then \(X\) admits no non-isomorphic surjective endomorphism.
\end{proposition}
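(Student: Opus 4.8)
The plan is to split into the two cases --- Mukai manifolds of Picard number 1, and non-linear smooth complete intersections of dimension $\geq 3$ --- and in each case combine the bigness (or rather, non-bigness) results just established with Theorem \ref{thm-main} and the Kawakami--Totaro theorem on Bott vanishing. Suppose for contradiction that $X$ admits a non-isomorphic surjective endomorphism $f\colon X\to X$. Since $X$ has Picard number 1 and is not a projective space in either case, Theorem \ref{thm-main} tells us that the only obstruction to deriving a contradiction is the possibility that the VMRT of $X$ at a general point is dual defective, together with the possibility that $T_X$ is big. So the strategy is: first rule out bigness of $T_X$ using Theorems \ref{thm-complete-intersection}, \ref{thm-del-Pezzo} and Proposition \ref{prop-Mukai-nonbig} (for the small-genus Mukai manifolds) plus the Bott-vanishing/local-rigidity input to handle the remaining Mukai cases $g(X)\geq 6$; then handle the non-dual-defective hypothesis separately.

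For complete intersections: if $X$ is a non-linear smooth complete intersection of dimension $\geq 3$ that is not a quadric, then $T_X$ is not big by Theorem \ref{thm-complete-intersection}; if $X$ is a quadric of dimension $\geq 3$, it is known classically (this is case (2) of the list following Conjecture \ref{main-conj-pn}, via \cite{PS89}, \cite{Bea01}, \cite{HM03}) that $X$ admits no non-isomorphic surjective endomorphism, so that case is done directly. When $T_X$ is not big, I want to invoke Theorem \ref{thm-main} with $Y=X$; for this I need the VMRT of $X$ at a general point to be non-dual-defective. For complete intersections this should follow from the standard description of the VMRT of a Fano complete intersection (by \cite{Liu23} and the references therein, the VMRT of a general Fano complete intersection is itself a complete intersection of the expected type, which is non-dual-defective except in a short explicit list), so one checks that non-linear smooth complete intersections of dimension $\geq 3$ are never in the dual-defective list --- here one may also need to treat a few low-dimensional or low-index special cases by hand, e.g. del Pezzo and Mukai complete intersections, falling back on Theorem \ref{thm-del-Pezzo} or Proposition \ref{prop-Mukai-nonbig} or on the already-known cases (3), (5), (6). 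Then Theorem \ref{thm-main} applies and $f$ must be an isomorphism, contradiction.

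For Mukai manifolds of Picard number 1: here $2\le g(X)\le 12$, $g(X)\ne 11$. If $g(X)\le 5$, or $g(X)=6$ with $\dim X\ne 6$, then $T_X$ is not big by Proposition \ref{prop-Mukai-nonbig}, and --- after checking the VMRT of a Mukai manifold at a general point is non-dual-defective (again via the classification of VMRTs in \cite{Liu23} and the fact that Mukai manifolds are covered by lines with the VMRT being a well-understood projective variety, typically a curve or a low-dimensional variety of small codegree) --- Theorem \ref{thm-main} with $Y=X$ forces $f$ to be an isomorphism, a contradiction. For the remaining Mukai manifolds ($g(X)\in\{6\ (\dim=6),7,8,9,10,12\}$), rather than bigness I use the Kawakami--Totaro result: a Fano manifold of Picard number 1 carrying a non-isomorphic surjective endomorphism satisfies Bott vanishing, hence is locally rigid, $H^1(X,T_X)=0$; but Mukai manifolds of the relevant genera have positive-dimensional moduli (nonzero $H^1(X,T_X)$), a contradiction. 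Equivalently one can cite \cite{KT23} directly for the Bott-vanishing obstruction for these explicit genera. The main obstacle I anticipate is precisely the bookkeeping around the non-dual-defective hypothesis: one must verify, case by case across the relevant complete intersections and low-genus Mukai manifolds, that the general-point VMRT is not dual defective, appealing to the VMRT classification, and cleanly dispatch the handful of exceptional families (quadrics, $\mathbb{P}^n$, del Pezzo and Mukai complete intersections of low degree) through the already-known instances of Conjecture \ref{main-conj-pn} rather than through Theorem \ref{thm-main}.
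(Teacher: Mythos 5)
Your proposal has a fatal logical inversion at its core: Theorem \ref{thm-main} has as a \emph{hypothesis} that $T_X$ is big (together with non-dual-defectivity of the VMRTs), and only under that hypothesis does it forbid non-isomorphic endomorphisms. You propose to first establish that $T_X$ is \emph{not} big (via Theorem \ref{thm-complete-intersection} and Proposition \ref{prop-Mukai-nonbig}) and then ``invoke Theorem \ref{thm-main} with $Y=X$''; but once $T_X$ fails to be big, Theorem \ref{thm-main} simply does not apply and yields no conclusion. Nothing in the paper (nor any known general result) says that a Fano manifold of Picard number $1$ admitting a non-isomorphic surjective endomorphism must have big tangent bundle, so non-bigness of $T_X$ cannot be played off against the existence of the endomorphism. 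In the paper the non-bigness statements are \emph{consequences} of Theorem \ref{thm-main}, obtained by applying it to auxiliary finite covers whose source does have big tangent bundle; they are not inputs that can be fed back into Theorem \ref{thm-main} for $X$ itself.

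The fallback step for the higher-genus Mukai manifolds is also false: you claim these all have $H^1(X,T_X)\neq 0$, but locally rigid Mukai manifolds of Picard number $1$ do exist, e.g.\ general linear sections of the $10$-dimensional spinor variety of codimension $\leq 3$ and general hyperplane sections of $\mathrm{Gr}(2,6)$ and of $\mathrm{SG}(3,6)$ (as well as the homogeneous Mukai manifolds themselves). This is exactly why the paper's proof runs in the opposite direction: assuming an endomorphism, \cite{KT23} gives Bott vanishing, hence local rigidity. For complete intersections, \cite{BFM20} then forces $X$ to be $\mathbb{P}^n$ or a quadric, and the quadric is excluded by \cite{PS89}. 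For Mukai manifolds, after reducing to dimension $\geq 5$ and genus $\geq 4$, the classification of locally rigid ones (\cite{BFM20}, \cite{Per16}) leaves precisely the three families above; all but the codimension-$3$ spinor section are almost homogeneous and hence excluded by \cite{HN11}, and that last case is excluded by Theorem \ref{thm-main} precisely because its tangent bundle \emph{is} big with non-dual-defective VMRT (Example \ref{exa-spinor}). So Theorem \ref{thm-main} enters only at that single point, with its bigness hypothesis satisfied rather than negated, and your proposed route through non-bigness and VMRT bookkeeping cannot be repaired without an entirely different mechanism.
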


\subsubsection*{\textbf{\textup{Acknowledgments}}}
The authors would like to express their gratitude to Jun-Muk Hwang for suggesting this project and for many inspiring discussions.
The authors would also like to thank Jie Liu for numerous valuable discussions and suggestions, especially pointing out Example \ref{exa-spinor} and providing the appendix to us, without which they could not verify  Theorem \ref{thm-complete-intersection} for the pseudo-effectiveness of tangent bundles. The authors thank the referee for the very careful reading and the suggestions to improve the paper.  
Special thanks of the second author to IMS at NUS for the warm hospitality and kind support.
Both authors were supported by the Institute for Basic Science (IBS-R032-D1-2023-a00). 

\subsubsection*{\textbf{\textup{Data availability}}}
This manuscript has no associated data.

\subsubsection*{\textbf{\textup{Conflict of interest}}}
On behalf of all authors, the corresponding author states that there is no conflict of interest.

\section{Proofs of Theorems \ref{thm-main} and \ref{thm-m-to-m}}\label{s:pf-main}
In this section, after briefly reviewing the theory of minimal rational curves, we prove Theorems \ref{thm-main} and \ref{thm-m-to-m}. 
For the standard notion and terminology, we refer to \cite{Haw01}.

Let \(X\) be a smooth projective variety.
Denote by \(\textup{RatCurves}^n(X)\) the normalization of the open subset of \(\textup{Chow}(X)\) parametrizing integral rational curves. 
A \textit{dominating family of minimal rational curves} \(\mathcal{K}\) is an irreducible component of \(\textup{RatCurves}^n(X)\) such that the locus \(\text{Locus}(\mathcal{K})\) of \(X\) swept out by curves from \(\mathcal{K}\) is dense in \(X\) and for a general point \(x\in \text{Locus}(\mathcal{K})\) the closed subset \(\mathcal{K}_x\subseteq\mathcal{K}\) parametrizing curves through \(x\) is proper.
It is known that any uniruled projective manifold carries a dominating family of minimal rational curves.
A general element \([\ell]\in \mathcal{K}\) is called a \textit{standard rational curve}, i.e., there exists a non-negative integer \(p\) such that 
\[
f^*T_X\cong\mathcal{O}_{\mathbb{P}^1}(2)\oplus\mathcal{O}_{\mathbb{P}^1}(1)^{\oplus p}\oplus\mathcal{O}_{\mathbb{P}^1}^{\oplus(\dim(X)-p-1)}
\]
where \(f\colon\mathbb{P}^1\to \ell\) is the normalization.
Let \(x\in X\) be a general point, and \(\mathcal{K}_x^n\) the normalization of \(\mathcal{K}_x\), which is a finite union of smooth projective varieties of dimension \(p\).
There is a tangent map \(\tau_x\colon \mathcal{K}_x^n\dashrightarrow\mathbb{P}(\Omega_{X,x})\) sending a curve that is smooth at \(x\) to its tangent direction at \(x\).
Let \(\mathcal{C}_x\) be the image of \(\tau_x\), which is called the \textit{variety of minimal rational tangents} (VMRT for short) at \(x\) associated to \(\mathcal{K}\).
By \cite{Keb02} and \cite{HM04}, the map \(\tau_x\) is the normalization morphism of \(\mathcal{C}_x\) and hence the dimension of \(\mathcal{C}_x\) is equal to the dimension of \(\mathcal{K}_x\)  which is exactly \(p\).

For a standard rational curve \([\ell]\in\mathcal{K}\), a minimal section \(\overline{\ell}\) of \(\mathbb{P}(T_X)\) over the curve \(\ell\) is a section which is given by a surjection \(f^*T_X\to\mathcal{O}_{\mathbb{P}^1}\).
If \(X\) is not a projective space, then the main theorem of \cite{CMSB02} shows that such a minimal section \(\overline{\ell}\) always exists; clearly, we have \(\mathcal{O}_{\mathbb{P}(T_X)}(1)\cdot \overline{\ell}=0\).
The \textit{total dual VMRT} of \(\mathcal{K}\) is defined as
\[
\check{\mathcal{C}}\coloneqq\overline{\bigcup_{\textup{general}\ [\ell]\in\mathcal{K}}\overline{\ell}}^{\textup{Zar}}\subseteq\mathbb{P}(T_X)
\]
where the union is taken over all minimal sections over all standard rational curves in \(\mathcal{K}\).
Note that \(\check{\mathcal{C}}\subseteq\mathbb{P}(T_X)\) is an irreducible projective variety dominating \(X\).
Let \(\check{\mathcal{C}}_x\) be the fibre of \(\check{\mathcal{C}}\to X\) at \(x\in X\).
It follows from \cite[Proposition 5.14]{MOS15} that \(\check{\mathcal{C}}_x\) is the dual variety of \(\mathcal{C}_x\) and hence the total dual VMRT \(\check{\mathcal{C}}\subseteq\mathbb{P}(T_X)\) is a prime divisor 
if and only if for a general point \(x\in X\), \(\mathcal{C}_x\subseteq\mathbb{P}(\Omega_{X,x})\) is not dual defective. 
We refer to \cite{OCW16}, \cite[Section 2.B.]{HLS22} and \cite[Section 3.A.]{FL22} for more information involved.  

We stick to the notations and assumptions below in the remaining part of this section.
\begin{notation}\label{notation}
Let \(f\colon X\to Y\) be a non-isomorphic surjective morphism between Fano manifolds of Picard number 1. 
Let \(T_X\) and \(T_Y\) be the tangent bundles of \(X\) and \(Y\) respectively. 
Assume that \(Y\) is not isomorphic to a projective space. 
Then \(X\) is not isomorphic to a projective space, either; see \cite[Theorem 4.1]{Laz84}.
\begin{enumerate}
\item Dualizing the right exact sequence \(f^*\Omega_{Y}\to\Omega_X\to \Omega_{X/Y}\to 0\) and noting that \(\Omega_{X/Y}\) is a torsion sheaf, we obtain a left exact sequence \(0\to T_X\to f^*T_Y\) which is indeed an isomorphism away from the ramification divisor \(R_f\). 
\item By (1), we obtain the following commutative diagram associated with the injection \(0\to T_X\to f^*T_Y\) such that the induced map \(\mathbb{P}(f^*T_Y)\dashrightarrow \mathbb{P}(T_X)\) is an isomorphism away from the inverse image of \(R_f\) and hence birational.
Let \(\Gamma\) be its graph. 
\[
\xymatrix{&&\Gamma\ar[dl]_{\beta}\ar[dr]^{\alpha}&\\
\mathbb{P}(T_Y)\ar[d]^{\phi}&\mathbb{P}(f^*T_Y)\ar@{-->}[rr]\ar[d]^\varphi\ar[l]_{\widetilde{f}}&&\mathbb{P}(T_X)\ar[d]^\tau\\
Y&X\ar@{=}[rr]\ar[l]_{f}&&X
}
\]
\item Denote by \(\xi\) (resp. \(\eta\)) the tautological line bundle of \(\mathbb{P}(T_X)\) (resp. \(\mathbb{P}(T_Y)\)).
Let \(\widetilde{\eta}\) be the pullback \(\widetilde{f}^*\eta\) which is the tautological line bundle of \(\mathbb{P}(f^*T_Y)\).
\item Let \(\mathcal{K}\) (resp. \(\mathcal{G}\)) be a dominating family of minimal rational curves on \(X\) (resp. on \(Y\)). 
Assume that both VMRTs along a general point are not dual defective. 
\item Denote by \(D_X\subseteq \mathbb{P}(T_X)\) and \(D_Y\subseteq\mathbb{P}(T_Y)\) the total dual VMRTs of \(\mathcal{K}\) and \(\mathcal{G}\)  respectively. 
By our assumption, both \(D_X\) and \(D_Y\) are irreducible hypersurfaces.
\item Let \(H_X\) be the ample generator of the Picard group \(\textup{Pic}(X)\).  
\end{enumerate}
\end{notation}

First, we prove Theorem \ref{thm-m-to-m}. 
For the convenience of the proof, we also recall the following notion in \cite[Definition 2.1]{Hwa15}. 
Let \(M\) be a complex manifold equipped with a closed holomorphic 2-form \(\omega\).
For a point \(z\in M\), let
\[
\text{Null}_z(M)\coloneqq\{u\in T_z(M)~|~\omega(u,v)=0~\text{for all }v\in T_z(M)\}.
\]
This defines a distribution, called the \textit{null distribution} on a Zariski open subset of \(M\).
If \(\text{Null}_z(M)=0\) holds for every point \(z\in M\), then \(\omega\) is a \textit{symplectic form} and \(M\) is a symplectic manifold. 
Now let \((M,\omega)\) be a symplectic manifold equipped with a non-degenerate symplectic 2-form \(\omega\).
Given an irreducible subvariety \(Z\subseteq M\) with the smooth locus of \(Z\) denoted by \(Z_\text{sm}\), we consider the restriction \(\omega|_{Z_{\text{sm}}}\).
The rank of the null distribution of \(\omega|_{Z_{\text{sm}}}\) is no more than the codimension \(\text{codim}_MZ\) and if the equality holds, then we say that \(Z\) is \textit{coisotropic}.
The null distribution on \(Z\) defines a foliation on a Zariski open subset of \(Z\) which we call the \textit{null foliation} of \(\omega\) on \(Z\).

\begin{proof}[Proof of Theorem \ref{thm-m-to-m}]
We consider the following rational map \(\Phi\) between the cotangent spaces $T^*X$ and $T^*Y$:
$$
\begin{aligned}
&T^*X\stackrel{\Phi}\dashrightarrow T^*Y\\
&(s,t)\mapsto (f(s),(df_s^*)^{-1}(t))
\end{aligned}
$$
where $s\in X$ is a point outside the support of the ramification divisor of $f$, $t\in T_s ^*X$ is a cotangent vector and $df_s^*:T^*_{f(s)}Y\to T^*_sX$ is the dual of the differential map. 
We note that for any point \(x\in X\), the cotangent space \(T_x^*X\) is the affine cone over \(\mathbb{P}(T_{X,x})\).

Let $\omega$ be the natural symplectic form on $T^*Y$. 
In the following, by abuse of notation, we identify \(D_X\) and \(D_Y\) with their affine cones. 
For a general point $z\in T^*X$, $d\Phi_z $ is an isomorphism and  $\Phi$ induces a 2-form $\Phi^*(\omega)$ of $T^*X$ defined by the following:
$$
\Phi^*(\omega)(u,v)\coloneqq\omega(d\Phi_z(u), d\Phi_z(v)),
$$  
where $u,v\in T_{z}(T^*X)$ and $\Phi^*(\omega)$ is a symplectic form on a open subset of $T^*X$.

Suppose that $C$ is a general leaf of the null foliation of $\Phi^*(\omega)$ on $D_X$. 
For a general $z\in C$, we have $\Phi^*(\omega)(u,v)=0$ for arbitrary $v\in T_z C$ and $u\in T_z D_X$.  
Consider the image $\Phi (C)$. 
By our assumption, the total dual VMRT \(D_X\) is mapped onto the total dual VMRT \(D_Y\) along the rational map \(\Phi\); hence \(\Phi(C)\) is contained in \(D_Y\).
Given any $v'\in T_{\Phi(z)}\Phi(C)$ and any \(u'\in T_{\Phi(z)}D_Y\), we obtain that
$$
\omega(u',v')=\Phi^*(\omega)(d\Phi_z^{-1}(u'), d\Phi_z^{-1}(v'))=0,
$$
noting that $d\Phi_z^{-1}(u')\in T_z D_X$ and $d\Phi_z^{-1}(v')\in T_z C$.  
Therefore, $\Phi(C)$ is a leaf of the null foliation of $\omega$ on  $D_Y$. 
By \cite[Proposition 2.4]{Hwa15}, both \(D_X\) and \(D_Y\) are coisotropic and 
the closure of $C$ and the closure of $\Phi(C)$  are minimal sections over minimal rational curves; moreover, a general minimal section of \(\tau\) (resp. \(\phi\)) can be realized as the closure of a leaf of the null foliation of $\Phi^*(\omega)$ (resp. $\omega$) on $D_X$ (resp. \(D_Y\)). 

Let  \(\mathcal{M}_X\subseteq \text{Chow}(\mathbb{P}(T_X))\) and \(\mathcal{M}_Y\subseteq\text{Chow}(\mathbb{P}(T_Y))\) be the families of minimal sections of \(\tau\) and \(\phi\), respectively.
Then we have the following commutative diagram
\[
\xymatrix{
\text{Chow}(\mathbb{P}(T_X))\ar@{-->}[r]\ar[d]_{\tau_*}&\text{Chow}(\mathbb{P}(T_Y))\ar[d]^{\phi_*}\\
\text{Chow}(X)\ar@{-->}[r]&\text{Chow}(Y)
}
\]
Since the proper transform of \(D_X\) is \(D_Y\) and a general minimal section of \(\tau\) (resp. \(\phi\)) is the closure of a leaf of the  null foliation on \(D_X\) (resp. \(D_Y\)), by the above argument, a general minimal section of \(\tau\) is mapped to a general minimal section of \(\phi\); in particular, \(\mathcal{M}_X\) is sent to \(\mathcal{M}_Y\) via the first horizontal map. 
Therefore, we obtain the induced map \(\mathcal{K}\dashrightarrow\mathcal{G}\) via the second horizontal map which is also dominant. 
In particular, $f$ maps a general minimal rational curve \(\ell\) of \(\mathcal{K}\) to a general minimal rational curve \(\ell'\) of \(\mathcal{G}\) and the first half of our statement is completed. 

Assume that the normal bundle \(\mathcal{O}_{\ell/Y}\) (resp.~\(\mathcal{O}_{\ell'/Y}\)) is of the form \(\mathcal{O}_{\ell}(1)^{\oplus p}\oplus \mathcal{O}_{\ell}^{\oplus (\dim(X)-1-p)}\) (resp.~\(\mathcal{O}_{\ell'}(1)^{\oplus p'}\oplus \mathcal{O}_{\ell'}^{\oplus (\dim(Y)-1-p')}\)). 
Let \(\mathcal{U}\) and \(\mathcal{V}\) be the universal families of \(\mathcal{K}\) and \(\mathcal{G}\) respectively.
Then we obtain the following commutative diagram
\[
\xymatrix{
\mathcal{K}\ar@{-->}[d]&\mathcal{U}\ar[l]_{\rho_1}\ar[r]^{e_1}&X\ar[d]^f\\
\mathcal{G}&\mathcal{V}\ar[l]_{\rho_2}\ar[r]^{e_2}&Y
}
\]
On the one hand, as \(\mathcal{K}\dashrightarrow\mathcal{G}\) is dominant, for a general point \(x\in X\), the induced map
\[
\mathcal{K}_x=\rho_1(e_1^{-1}(x))\dashrightarrow\mathcal{G}_{f(x)}=\rho_2(e_2^{-1}(f(x)))
\]
is also dominant. 
Hence, \(p=\dim(\mathcal{K}_x)\geq\dim(\mathcal{G}_{f(x)})=p'\).
On the other hand, since \(f\) is finite and the inverse image \(f^{-1}(\mathcal{G})\) contains \(\mathcal{K}\) as an irreducible component, it follows that
\[
p'=\dim(f^{-1}(\mathcal{G})_x)\geq\dim(\mathcal{K}_x)=p.
\]
This implies that \(p=p'\).

In the following, we assume that \(p\geq 1\). 
We claim that, for a general point \(x\in X\) away from the ramification divisor, there exists a general element \([\ell]\in\mathcal{K}_x\) which is birational to its image \(\ell'\coloneqq f(\ell)\). 
Indeed, as the normal bundle \(\mathcal{N}_{\ell/X}\) cannot have sections vanishing along two distinct points, the deformation family of minimal rational curves with two points fixed is 0-dimensional. Hence there exists a minimal rational curve $\ell$ passing through the point $x$ but away from \(\textup{Supp}\,(f^{-1}(f(x)))\backslash\{x\}\) so  that \(\deg(f|_{\ell})=1\) (cf.~\cite[Lemma 2.1]{Haw01}).  
So our claim is thus proved. 

From the following normal bundle sequences
\[
0\to T_\ell=\mathcal{O}_\ell(2)\to T_X|_\ell\to\mathcal{N}_{\ell/X}=\mathcal{O}_\ell(1)^{\oplus p}\oplus\mathcal{O}_{\ell}^{\oplus(\dim(X)-1-p)}\to 0, \textup{ and}
\]
\[
0\to T_{\ell'}=\mathcal{O}_{\ell'}(2)\to T_Y|_{\ell'}\to\mathcal{N}_{\ell'/Y}=\mathcal{O}_{\ell'}(1)^{\oplus p'}\oplus\mathcal{O}_{\ell'}^{\oplus(\dim(Y)-1-p')}\to 0,
\]
we obtain that 
\[K_X\cdot\ell=-\det(T_X|_\ell)=-\det(T_Y|_{\ell'})=K_Y\cdot\ell'=K_Y\cdot f_*(\ell).
\]
However, this leads to a contradiction to the ampleness of the ramification divisor \(R=K_X-f^*K_Y\), noting that \(Y\) is simply connected and \(X\) is of Picard number 1.
Theorem \ref{thm-m-to-m} is thus proved.
\end{proof}

Second, we prove Theorem \ref{thm-main}.
\begin{proof}[Proof of Theorem \ref{thm-main}]
As \(\xi\) is big, it follows from \cite[Theorem 3.4]{FL22} (cf.~ \cite{HLS22}) that {\color{blue}\((*)\)} \(D_X\equiv a\xi-b\tau^*H_X\) holds where \(a>0\) is the codegree of the VMRT \(\mathcal{C}_x\) at a general point and \(b>0\) is an integer.
Since the total dual VMRT \(D_X\) (as a prime divisor in \(\mathbb{P}(T_X)\) by our assumption) \(D_X\) is covered by minimal sections \(\overline{\ell}\subseteq\mathbb{P}(T_X)\) of \(\mathcal{K}\) such that \(\xi\cdot \overline{\ell}=0\), we have \(D_X\cdot \overline{\ell}<0\).
Hence, \(D_X|_{D_X}\) is not pseudo-effective and thus the irreducible divisor \(D_X\) is not big; in particular, \(D_X\)  is extremal in the pseudo-effective cone \(\text{PE}(\mathbb{P}(T_X))=\mathbb{R}_{\geq 0}[D_X]+\mathbb{R}_{\geq 0}[\tau^*H_X]\) (see \cite[Theorem 3.4 (3)]{FL22}).  
In addition, as \(\mathbb{P}(T_X)\) is simply connected, the numerical equivalence of integral Cartier divisors {\color{blue}\((*)\)} is indeed a linear equivalence. 
Let $D_X'\coloneqq\beta_*(\alpha_*^{-1}(D_X))\subseteq\mathbb{P}(f^*T_Y)$ be the proper transform of $D_X$ along the birational map $\mathbb{P}(f^*T_Y)\dashrightarrow \mathbb{P}(T_X)$. 

To apply Theorem \ref{thm-m-to-m}, we are left to show the equality \(D_X'=\widetilde{f}^*D_Y\). 
By the natural injection \(0\to T_X\to f^*T_Y\), we have the induced injection
\[
0\to H^0(X,\Sym^aT_X\otimes\mathcal{O}_X(-bH_X))\to H^0(X,\Sym^a(f^*T_Y)\otimes \mathcal{O}_X(-bH_X)).
\]
On the one hand, as both \(\alpha\) and \(\beta\) are birational with connected fibres, we have the induced injection
\[
0\to H^0(\Gamma,a\alpha^*\xi-b\alpha^*\tau^*H_X)\to H^0(\Gamma,a\beta^*\widetilde{\eta}-b\beta^*\varphi^*H_X).
\]
On the other hand, by the linear equivalence \(D_X\sim a\xi-b\tau^*H_X\) and the fact that \(T_X\to f^*T_Y\) is an isomorphism away from the ramification divisor \(R_f\) of \(f\), there exists some \(m\geq 0\) such that 
\[
a\widetilde{\eta}+\varphi^*(-bH_X)\sim D_X'+m\varphi^*H_X,
\]
noting that the induced birational map \(\alpha\circ\beta^{-1}\) is then an isomorphism away from the pullback \(\varphi^*(\textup{Supp}\,R_f)\) of the support of \(R_f\). 
Therefore, we have \(D_X'\sim a\widetilde{\eta}-(b+m)\varphi^*H_X\).
Since both \(\alpha\) and \(\beta\) are birational and isomorphisms over the \'etale locus of \(f\), together with \(D_X\) being dominant over \(X\), it follows that \(D_X'\) is a prime divisor.
As the total dual VMRT \(D_Y\) is covered by minimal sections \(\overline{c}\) such that \(\eta\cdot \overline{c}=0\), by the projection formula, its pullback \(\widetilde{f}^*D_Y\) is covered by curves \(\overline{c}'\) such that \(\widetilde{\eta}\cdot\overline{c}'=0\).
In particular,
\[
\overline{c}'\cdot D_X'=\overline{c}'\cdot (a\widetilde{\eta}-(b+m)\varphi^*H_X)<0,
\]
where we used the fact that \(b>0\) due to the bigness of  \(T_X\) (cf.~\cite[Theorem 3.4]{FL22}). 
Therefore, \(\overline{c}'\subseteq D_X'\) and hence \(\textup{Supp}\,\widetilde{f}^*D_Y\subseteq\textup{Supp}\,D_X'\).
By the irreducibility of \(D_X'\), the desired equality is proved. So if the VMRT at a general point of $X$ has positive dimension, then the result follows from Theorem \ref{thm-m-to-m}.

Now assume that the dimension of VMRT at a general point of $X$ is 0. Then the dimension of VMRT at a general point of $Y$ is also 0 (see Remark \ref{rmk-thm-preserve-vmrt}). 
By Lemma \ref{positivity-finite-morphism}, $T_Y$ is also big. 
Then it follows from \cite[Theorem 1.1]{HL23} that both $X$ and $Y$ are isomorphic to quintic del Pezzo threefolds.  
As there is a unique isomorphic class of quintic del Pezzo threefolds (see e.g. \cite[Remark 3.8]{BFM20}), we obtain from \cite[Corollary 3]{HM03} that any surjective morphism from $X$ to $Y$ is an isomorphism.
\end{proof}

\section{Examples and Proof of Proposition \ref{prop-mukai-endo}}
In this section, after collecting some typical examples of Theorem \ref{thm-main}, we prove Proposition \ref{prop-mukai-endo}.
\begin{example}[Rational homogeneous spaces of Picard number 1]\label{exa-rational-homogeneous}
By the generic finiteness of the Springer map, the tangent bundle of a rational homogeneous space is always big (cf.~\cite[Corollary 4.4]{GW20}, \cite[Proposition 6.2]{Sha23} and \cite[Example 3.9.2]{Liu23}). 
Besides, the VMRT of a rational homogeneous space of Picard number 1 can be fully determined by \cite[Theorem 4.8]{LM03}. 
We refer the reader to \cite[Appendix A]{FL22} for a full classification of rational homogeneous spaces of Picard number 1 whose VMRTs are not dual defective.
\end{example}

\begin{example}[Quintic del Pezzo fourfolds]\label{e:quintic}
Let \(X\) be a del Pezzo manifold of Picard number 1 and  dimension \(n\).
Let \(H_X\) be the ample generator of \(\text{Pic}(X)\) such that the degree \(d(X)\coloneqq H_X^n\) is 5.
Then it is known that \(3\leq n\leq 6\) and the tangent bundle \(T_X\) is big (see \cite[Theorem 1.5]{HLS22} and \cite[Proposition 3.15 and Example 4.5]{Liu23}; cf. \cite[Theorem 1.1]{HL23}).
If \(n\leq 5\), then the VMRT at a general point is not dual defective, and hence such \(X\) does not admit any non-isomorphic surjective endomorphism by applying Theorem \ref{thm-main} (cf.~\cite[Theorem 1.4]{HN11}).
If \(n=6\), then \(X\) is homogeneous and hence there is no non-isomorphic surjective endomorphism, either (see \cite{PS89}); however, in this case, the VMRT along a general point is \(\mathbb{P}^1\times\mathbb{P}^2\subseteq\mathbb{P}^5\) which is dual defective.
\end{example}

The following is a typical example that is not almost homogeneous but has a big tangent bundle with the VMRT along a general point not dual defective. 
\begin{example}\label{exa-spinor}
Let \(X\) be a smooth linear section of a 10-dimensional spinor variety of codimension 3.
It is known that there are four isomorphism classes and the general one is not almost homogeneous (see \cite[Proposition 33, p. 126]{SK77}). 
On the other hand, the tangent bundle \(T_X\) is big and the VMRT along a general point is not dual defective; see \cite[Corollary 4.18]{Liu23}.
By Theorem \ref{thm-main}, such \(X\) does not admit any non-isomorphic surjective endomorphism.
\end{example}

\begin{proof}[Proof of Proposition \ref{prop-mukai-endo}]
Suppose to the contrary that \(X\) admits a non-isomorphic surjective endomorphism.
By \cite[Proposition 2.7]{KT23}, such \(X\) satisfies Bott vanishing and then \(X\) is locally rigid.
However, as proved in \cite{BFM20}, no smooth complete intersection in a given projective space is locally rigid except for the projective space and the hyperquadric, while the latter one admits no endomorphism by \cite[Proposition 8]{PS89}.

In the following, we assume that \(X\) is a Mukai manifold of Picard number 1, of dimension \(n\), and of genus \(g(X)\). 
We refer the reader to \cite{Muk89} for the classification of Mukai manifolds.
We may further assume that \(n\geq 5\) (cf.~\cite{ARV99}, \cite{HM03} and \cite[Corollary 5.2]{SZ22}). 
By \cite[Theorem 3.1 and Proposition 3.6]{SZ22} and \cite{Bea01} (cf.~\cite[Remark 3.8]{SZ22}), we may assume that \(g(X)\geq 4\) and hence the ample generator \(H_X\) of the Picard group \(\text{Pic}(X)\) is very ample (see e.g. \cite[Theorem 5.2.1 and the paragraph after it]{IP99}). 
As \(X\) is locally rigid, by \cite[Theorem 1.1]{BFM20} and \cite[Lemma 1.10]{Per16}, such \(X\) can only be one of the following:
\begin{enumerate}
\item a general codimension \(k\) (with \(k\leq 3\))  linear section of a 10-dimensional spinor variety;
\item a general hyperplane section of \(\text{Gr}(2,6)\);
\item a general hyperplane section of the symplectic Grassmannian \(\text{SG}(3,6)\).
\end{enumerate}
On the other hand, it follows from \cite[Theorem 1.2, Propositions 4.7 and 4.8]{BFM20} that all of the above varieties except for Case (1) with \(k=3\) are almost homogeneous, and hence they admit no non-isomorphic surjective endomorphisms (see \cite[Theorem 1.4]{HN11}).
The only case left when \(X\) is a smooth linear section of a 10-dimensional spinor variety of codimension 3 has been excluded by our Theorem \ref{thm-main} and \cite[Corollary 4.18]{Liu23} (cf.~Example \ref{exa-spinor}).
\end{proof}


\section{Bigness of tangent bundles of certain projective manifolds}\label{s:ex}
In this section, using Theorem \ref{thm-main} and developing some other techniques, we study the bigness of tangent bundles of smooth complete intersections, del Pezzo manifolds, and Mukai manifolds. 

\subsection{Smooth complete intersections, Proof of Theorem \ref{thm-complete-intersection}}


\begin{lemma}\label{positivity-finite-morphism}
    Let $f\colon X\to Y$ be a generically finite surjective morphism between smooth projective varieties. 
    If the tangent bundle \(T_X\) is big (resp. pseudo-effective), then the tangent bundle $T_Y$ is also big (resp. pseudo-effective).
\end{lemma}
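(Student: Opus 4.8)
The plan is to transfer positivity statements about $\mathcal{O}_{\mathbb{P}(T_Y)}(1)$ to $\mathcal{O}_{\mathbb{P}(T_X)}(1)$ via the natural inclusion of sheaves induced by the differential of $f$. First I would recall that $f$ being generically finite means the differential $df\colon T_X\to f^*T_Y$ is injective as a morphism of sheaves (it is an isomorphism over the open locus where $f$ is \'etale). Dualizing, one gets a surjection $f^*\Omega_Y\to\Omega_X$ over that same open locus; but it is cleaner to work directly with the inclusion $T_X\hookrightarrow f^*T_Y$ and the associated inclusion of symmetric powers $\Sym^m T_X\hookrightarrow \Sym^m(f^*T_Y)=f^*(\Sym^m T_Y)$, which remains injective since it is injective at the generic point and $\Sym^m T_X$ is torsion-free.

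\medskip

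The key computation is then the comparison of section spaces. For any ample (or just big) divisor $A$ on $X$, choose an ample divisor $B$ on $Y$ and an integer $c>0$ with $f^*B \preceq cA$ in the sense that $cA - f^*B$ is effective (possible since $f^*B$ is big and $A$ is big/ample). Twisting the inclusion above by $\mathcal{O}_X(-f^*B)$ and taking global sections yields
\[
H^0\big(X,\Sym^m T_X\otimes \mathcal{O}_X(-f^*B)\big)\hookrightarrow H^0\big(X, f^*(\Sym^m T_Y\otimes \mathcal{O}_Y(-B))\big).
\]
By the projection formula and generic finiteness of $f$, the right-hand side equals $H^0\big(Y,\Sym^m T_Y\otimes\mathcal{O}_Y(-B)\otimes f_*\mathcal{O}_X\big)$, and since $f_*\mathcal{O}_X$ is a sheaf of rank $\deg f$ admitting a filtration whose graded pieces embed into a fixed locally free sheaf, the dimension of this space grows at most like a constant times the growth of $H^0(Y,\Sym^m T_Y\otimes \mathcal{O}_Y(-B))$ (more precisely, $f_*\mathcal{O}_X$ embeds into a direct sum of line bundles after a blow-up, or one simply bounds it by pushing forward an ample twist). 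If $T_Y$ is not big, then $\mathcal{O}_{\mathbb{P}(T_Y)}(1)$ is not big, so $h^0(Y,\Sym^m T_Y\otimes\mathcal{O}_Y(-B))$ grows strictly slower than $m^{\dim Y + \operatorname{rk} T_Y - 1}$; hence the same subexponential-in-the-Iitaka-sense bound passes to $X$, giving that $\mathcal{O}_{\mathbb{P}(T_X)}(1)$ has Iitaka dimension less than maximal, i.e. $T_X$ is not big. For the pseudo-effective statement, I would instead argue contrapositively on the pseudo-effective cone: if $\mathcal{O}_{\mathbb{P}(T_X)}(1)$ were pseudo-effective, then since $\mathbb{P}(df)\colon \mathbb{P}(T_X)\dashrightarrow \mathbb{P}(f^*T_Y)=\mathbb{P}(T_Y)\times_Y X$ is dominant and $\mathcal{O}_{\mathbb{P}(T_X)}(1)$ restricts from $\widetilde{\eta}$ (up to an effective correction supported on the ramification), pushing the pseudo-effective class forward along the generically finite map $\mathbb{P}(T_Y)\times_Y X\to \mathbb{P}(T_Y)$ would exhibit $\mathcal{O}_{\mathbb{P}(T_Y)}(1)$ as pseudo-effective, a contradiction.

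\medskip

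The main obstacle I anticipate is handling the discrepancy between $T_X$ and $f^*T_Y$ along the ramification divisor $R$ cleanly: the inclusion $T_X\hookrightarrow f^*T_Y$ is not a bundle inclusion everywhere, so the induced rational map on projectivizations is only birational onto its image after resolving along $R$, exactly as in the diagram of Notation \ref{notation}. To make the pushforward argument rigorous I would pass to the graph $\Gamma$ with its two birational/generically-finite projections $\beta\colon\Gamma\to\mathbb{P}(f^*T_Y)$ and $\alpha\colon\Gamma\to\mathbb{P}(T_X)$, note $\alpha^*\mathcal{O}_{\mathbb{P}(T_X)}(1)=\beta^*\widetilde{\eta} - E$ for an effective $\beta$-exceptional-type divisor $E$, and observe that bigness (resp. pseudo-effectivity) of $\alpha^*\mathcal{O}_{\mathbb{P}(T_X)}(1)$ forces the same for $\beta^*\widetilde{\eta}$, which is the pullback of $\eta$ from $\mathbb{P}(T_Y)$ under a generically finite morphism — and generically finite pullback detects bigness and preserves membership in the pseudo-effective cone upon pushforward. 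The rest is bookkeeping with the projection formula; I do not expect any delicate estimate beyond the standard fact that $h^0$ of a big line bundle grows at the maximal polynomial rate while that of a non-big one does not.
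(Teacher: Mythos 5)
Your argument is correct, and in its final form it is close to the paper's for the bigness half while taking a genuinely different route for pseudo-effectivity. Writing $\xi,\eta,\widetilde{\eta}$ for the tautological classes of $\mathbb{P}(T_X)$, $\mathbb{P}(T_Y)$, $\mathbb{P}(f^*T_Y)$: the paper handles bigness by noting that $T_Y$ is big iff $f^*T_Y$ is big, via invariance of Kodaira dimension under the generically finite surjective morphism $\mathbb{P}(f^*T_Y)\to\mathbb{P}(T_Y)$ (\cite[Theorem 5.13]{Uen75}), and then uses the injection $T_X\hookrightarrow f^*T_Y$ — which is exactly your last-paragraph step "$\alpha^*\xi=\beta^*\widetilde{\eta}-E$ with $E\geq 0$, and generically finite pullback detects bigness". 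For pseudo-effectivity the paper stays section-theoretic: non-psefness of $T_Y$ makes $\eta+\frac{1}{n}\pi^*A$ non-$\mathbb{Q}$-effective, this is pulled back by Ueno's theorem again, and one concludes with \cite[Lemma 2.2]{HLS22} and the injection of twisted symmetric powers. You instead argue numerically: add the effective modification divisor $E$ on the graph and push the psef class forward along the generically finite map to $\mathbb{P}(T_Y)$, using that proper pushforward preserves pseudo-effectivity and $g_*g^*\eta=(\deg g)\,\eta$. Both mechanisms are valid; yours trades the section-level lemma of \cite{HLS22} for standard cone-theoretic facts.

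One caveat: the quantitative estimate in your middle paragraph is not correct as stated. Embedding $f_*\mathcal{O}_X$ into a direct sum of line bundles necessarily introduces a positive twist, so what you actually get is a bound by a constant times $h^0\bigl(Y,\Sym^m T_Y\otimes\mathcal{O}_Y(CB)\bigr)$ for some fixed $C>0$, not by a constant times $h^0\bigl(Y,\Sym^m T_Y\otimes\mathcal{O}_Y(-B)\bigr)$; the latter can vanish identically while the former grows (e.g. $Y$ an abelian variety). The conclusion you need is still true — if $\eta$ is not big then $h^0\bigl(m\eta+C\pi^*B\bigr)=o\bigl(m^{2\dim Y-1}\bigr)$ for any fixed $C$, by continuity of the volume function, since $\mathrm{vol}(\eta+\epsilon\pi^*B)\to\mathrm{vol}(\eta)=0$ — but this extra input must be invoked explicitly. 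Alternatively, simply drop that paragraph: your last-paragraph argument (which is the paper's route for bigness) already settles that case without any growth estimate.
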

\begin{proof}
    Consider the surjective morphism $\mathbb{P}(f^*T_Y)\to \mathbb{P}(T_Y)$ induced by $f$. 
    Then the tautological line bundles of $\mathbb{P}(f^*T_Y)$ and $\mathbb{P}(T_Y)$ have the same Kodaira dimension by \cite[Theorem 5.13]{Uen75}. 
    Hence, $T_Y$ is big if and only if $f^* T_Y$ is big. 
    Since \(f\) is generically surjective and \(T_X\) is locally free, there is a natural injection $0\to T_X\to f^*T_Y$, and thus the bigness of $T_X$ implies the bigness of $T_Y$.

    Now suppose that $T_Y$ is not pseudo-effective. 
    Let $\eta$ (resp. $\widetilde{\eta}$) be the tautological line bundle of $\mathbb{P}(T_Y)$ (resp. $\mathbb{P}(f^*T_Y)$) and let $\pi\colon  \mathbb{P}(T_Y)\to Y$ and $\widetilde{\pi}\colon \mathbb{P}(f^*T_Y)\to X$ be the natural projections. 
    Let \(A\) be any ample divisor on $Y$.
    Then $\eta+\frac{1}{n}\pi^* A$ is not $\mathbb{Q}$-effective for any sufficiently large integer $n$.
    Hence, $\widetilde{\eta}+\frac{1}{n}\widetilde{\pi}^*f^*A$ is not $\mathbb{Q}$-effective for any sufficiently large integer $n$  (cf.~\cite[Theorem 5.13]{Uen75}). 
    Applying \cite[Lemma 2.2]{HLS22} and the injection $0\to T_X\to f^*T_Y$, we see that $T_X$ is not pseudo-effective, either.
\end{proof}

\begin{lemma}\label{l:trivial-VMRT}
Let \(X\) be a smooth non-linear Fano complete intersection of dimension \(\geq 3\). 
Then the VMRT is not dual defective along a general point.
\end{lemma}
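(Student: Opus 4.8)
The statement is that a smooth non-linear Fano complete intersection $X\subseteq\mathbb{P}^N$ of multi-degree $\underline{d}=(d_1,\dots,d_k)$ and dimension $\geq 3$ has a VMRT at a general point that is \emph{not} dual defective. The plan is to first identify the minimal rational curves and their VMRT concretely. Since $X$ is a Fano complete intersection that is non-linear, the minimal rational curves are lines on $X$ (when the Fano index is large enough, i.e.\ $\sum d_i \leq N$, which holds for non-linear complete intersections of dimension $\geq 3$ except possibly when the index is $1$ or $2$, where conics may be needed). So the first step is to split into the case where the minimal family $\mathcal{K}$ is the family of lines and the case of low index where it is the family of conics; in the line case, the VMRT $\mathcal{C}_x\subseteq\mathbb{P}(\Omega_{X,x})\cong\mathbb{P}^{n-1}$ at a general point $x$ is known by work of Hwang and others to be itself a complete intersection, cut out by equations of degrees $2,3,\dots,d_1,2,3,\dots,d_2,\dots$ inside $\mathbb{P}^{n-1}$ (the ``associated'' complete intersection), so it is again a smooth complete intersection (for $x$ general), and it is positive-dimensional precisely because $X$ is non-linear of dimension $\geq 3$.

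\textbf{Key steps in order.} (1) Recall/cite the description of the VMRT of a general Fano complete intersection: for $x$ general, $\mathcal{C}_x$ is a smooth complete intersection in $\mathbb{P}^{n-1}$ of the prescribed multi-degree (with the degree drop by one in each block), using results of Hwang or of the references on lines/conics through a general point. (2) Invoke the classical fact (Ein, or Landsberg--Weyman, see \cite{Tev05}) that a smooth complete intersection in $\mathbb{P}^m$ of dimension $\geq 1$ is \emph{never} dual defective, except for the trivial cases of a single hyperplane or a single quadric of dimension... — more precisely, the only dual defective smooth complete intersections are linear subspaces and (odd-dimensional) quadrics are \emph{not} defective either, so really only hyperplanes, and these are excluded by non-linearity. (3) Check that the VMRT $\mathcal{C}_x$ in the line case is neither a hyperplane nor empty nor zero-dimensional: non-linearity forces at least one $d_i\geq 2$, so $\mathcal{C}_x$ is cut out by at least one genuine equation of degree $\geq 2$, and $\dim X\geq 3$ guarantees $\dim\mathcal{C}_x\geq 1$ (the codimension of $\mathcal{C}_x$ in $\mathbb{P}^{n-1}$ equals $\sum d_i - k$, and one verifies $n-1-(\sum d_i - k)\geq 1$ exactly when $X$ is Fano of index $\geq 2$, i.e. $\sum d_i \leq n$). (4) Handle the remaining low-index cases (index $1$ or $2$) separately using the conic family, where the analogous description of the VMRT is available and again yields a non-dual-defective variety; alternatively cite directly the relevant statement in \cite{Liu23} or the hypersurface analysis in \cite{HLS22}.

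\textbf{Main obstacle.} The delicate point is the low Fano index case: when $\sum d_i = n$ or $n+1$ (index $1$ or $2$), lines through a general point may not exist or may not form the minimal family, so one must work with conics, and the structure of the VMRT of conics on a complete intersection is less transparent than that of lines. I expect the bulk of the genuine work to be in either (a) directly citing the explicit VMRT computation for these cases from \cite{Liu23} (which the paper already leans on heavily), or (b) arguing that even in the index-$1$ and index-$2$ cases the VMRT, whatever its precise equations, is a smooth positive-dimensional subvariety of projective space that is not a linear space, hence not dual defective by the Ein--type classification. A secondary, more routine obstacle is bookkeeping the inequality $n-1-(\sum d_i-k)\geq 1$ to make sure $\mathcal{C}_x$ is positive-dimensional whenever $\dim X\geq 3$ and $X$ is non-linear, since a zero-dimensional VMRT would be vacuously dual defective and must be ruled out.
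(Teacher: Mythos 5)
There is a genuine gap, and it sits exactly where you located the ``main obstacle''. You assert that ``a zero-dimensional VMRT would be vacuously dual defective and must be ruled out'', and accordingly your strategy is to force \(\dim\mathcal{C}_x\geq 1\). This is backwards: a finite set of points in \(\mathbb{P}^{n-1}\) is \emph{never} dual defective, since the dual variety of a single point is a hyperplane in the dual projective space, so the dual of a \(0\)-dimensional variety is a finite union of hyperplanes, i.e.\ a hypersurface. The paper's proof relies on precisely this observation: for Fano index \(\geq 2\), \(X\) is covered by lines and the VMRT is a complete intersection which is not dual defective by \cite[Example 1.4.2]{Haw01} and \cite[Theorem 5.11]{Tev05}; for index \(1\), \(X\) is covered by conics (\cite{CM80}) and the VMRT at a general point is \(0\)-dimensional, hence not dual defective. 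Your plan cannot recover these low-index cases: for a complete intersection of dimension \(n\) in \(\mathbb{P}^{n+k}\) the VMRT of lines has dimension \(\text{index}-2\), so it is positive-dimensional exactly when the index is \(\geq 3\) (not \(\geq 2\), as you claim — your codimension bookkeeping is off by the ambient codimension \(k\)), it is a finite set of points when the index is \(2\), and for index \(1\) the conic VMRT has dimension \(-K_X\cdot C-2=0\) as well. So your proposed route (b) for index \(1\) and \(2\), namely showing the VMRT is ``smooth positive-dimensional \dots not a linear space'', is attempting to prove something false, and under your (incorrect) reading of dual defectiveness the lemma itself would fail in those cases.

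The rest of your outline (lines for higher index, VMRT a complete intersection cut out in degrees \(2,\dots,d_i\), non-defectiveness of non-linear smooth complete intersections via the Ein/Tevelev-type results) matches the paper's argument for the index \(\geq 2\) case. To repair the proposal it suffices to delete the positive-dimensionality requirement, note that \(0\)-dimensional varieties are automatically non-defective, and treat only the two cases index \(\geq 2\) (lines) and index \(=1\) (conics, \(0\)-dimensional VMRT); no separate treatment of index \(2\) via conics is needed.
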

\begin{proof}
If the Fano index \(\geq 2\), then \(X\) is covered by lines; in particular, the VMRT is not dual defective by \cite[Example 1.4.2]{Haw01} and \cite[Theorem 5.11]{Tev05}.
Suppose that the Fano index is 1. 
Then it is known that \(X\) is covered by conics; see \cite{CM80}.
In this case, the VMRT along a general point is 0-dimensional which is not dual defective, either.
\end{proof}

\begin{proof}[Proof of Theorem \ref{thm-complete-intersection}]
Without loss of generality, we may assume that 
\(X\subseteq\mathbb{P}^{n+k}\) is a smooth complete intersection of dimension \(n\geq 2\) and of multi-degree \((d_1,\cdots,d_k)\) with \(d_i\geq 2\) for any \(1\leq i\leq k\).  
First, we consider the bigness of the tangent bundle.
If $T_X$ is big, then $X$ is uniruled by Miyaoka’s generic 
semipositivity theorem (cf.~\cite[Proposition 4.6]{GW20}). 
Since $X$ is a complete intersection, $X$ is Fano. 
If $\dim(X)=2$, then the result follows from \cite[Theorem 1.2]{HLS22}. 
So in the following, we assume $\dim(X)\geq 3$ and hence $X$ has Picard number 1 by the Lefschetz theorem. 
If \(k=1\), then \(X\) is a smooth hypersurface of degree \(d\geq 2\); hence \(T_X\) is big if and only if \(d=2\); see \cite[Theorem 1.4]{HLS22}.  
Let us further assume that \(k\geq 2\).  
Consider the following projection map:
\begin{align*}
\mathbb{P}^{n+k}\backslash\{p\coloneqq[0:\cdots:0:1]\}&\xrightarrow{\pi}\mathbb{P}^{n+k-1}\\
[x_0:\cdots:x_{n+k-1}:x_{n+k}]&\mapsto[x_0:\cdots:x_{n+k-1}].
\end{align*}
Let $Y\subseteq \mathbb{P}^{n+k-1}$ be a general smooth complete intersection of multi-degree $(d_1,...,d_{k-1})$.
Take \(Y'\subseteq\mathbb{P}^{n+k}\) to be the projective cone over \(Y\), in other words, \(Y'\) is defined by the closure of \(\pi^{-1}(Y)\).
It is clear that the ideal sheaf \(\mathcal{I}_{Y'}\) of \(Y'\) is generated by the same polynomials as that of \(Y\).
By Bertini's theorem, one can take a sufficiently general degree \(d_k\) hypersurface \(H_k\) in \(\mathbb{P}^{n+k}\) such that the scheme-theoretic intersection \(X'\coloneqq Y'\cap H_k\) is a smooth complete intersection of multi-degree \((d_1,\cdots,d_k)\). 
By construction, \(X'\to Y\) is a finite cover of degree \(d_k\), noting that \(X'\) does not pass through the singular vertex \(p\). 
By Lemma \ref{l:trivial-VMRT}, the VMRT of  \(X'\) and the VMRT of \(Y\) at general points are not dual defective.
Applying Theorem \ref{thm-main}, we obtain that the tangent bundle of $X'$ is not big.
As  $X'$ and $X$ are both smooth complete intersections of the same multi-degree, they can be deformed to each other. 
By \cite[Proposition 4.16]{Liu23}, the tangent bundle of \(X\) is not big. 

Second, we consider the pseudo-effectiveness of the tangent bundle.
By Semicontinuity theorem \cite[Chapter III, Theorem 12.8]{Har77} and \cite[Lemma 2.2]{HLS22}, it is sufficient to show that there exists a non-degenerate smooth complete intersection of multi-degree $(d_1,d_2,..,d_k)$ such that its tangent bundle is not pseudo-effective.  We may assume that $d_1\geq \cdots\geq d_k$. If $k=1$, then the result follows from \cite[Theorem 1.4]{HLS22}. Suppose that $k\geq 2$. 
Then by the first half of the proof, there exists a smooth complete intersection $X$ of multi-degree $(d_1,...,d_k)$ and a finite morphism from $X$ to a smooth hypersurface $Y$ of degree $d_1$. 
If $d_1\geq 3$. 
By \cite[Theorem 1.4]{HLS22} again, $T_Y$ is not pseudo-effective, and hence $T_X$ is not pseudo-effective either; see Lemma \ref{positivity-finite-morphism}. 
Suppose finally that $d_1=2$. 
By assumption, we may assume that \(k\geq 3\) and then by the first half of the proof, there exists a general finite double cover from a smooth complete intersection of $k$ quadrics \(X\) onto a general smooth complete intersection of \((k-1)\) quadrics \(Y\). 
We note that here, every double cover is uniquely determined by the \(Y\) and a given branch divisor \(H\in|\mathcal{O}_Y(2)|\), in other words, once we fix such \(Y\) and \(H\), the induced cyclic double cover \(X\) is a complete intersection of \(k\) quadrics; see e.g. \cite[Proposition 15]{Dim86}. 
Hence, in this case, the result follows from the induction, Lemma \ref{positivity-finite-morphism} and Theorem \ref{thm-intersection-quadrics}.
\end{proof}

\subsection{Del Pezzo manifolds, Proof of Theorem \ref{thm-del-Pezzo}}

The following proposition is a bit technical and has its own independent interest especially when we consider the projective dual of a singular variety with its normalization being smooth (cf.~\cite[Proposition 3.1]{Ein86}). 
We refer the reader to \cite[Section 2]{OCW16} for certain similar arguments under a stronger assumption of nodal singularities.
\begin{proposition}\label{p:dual-defect}
Let \(M\) be a smooth quasi-projective variety, \(X\subseteq M\) a smooth projective subvariety, and \(H\) a base-point-free ample Cartier divisor on \(X\).
Suppose that the morphism \(t\colon X\to P\coloneqq\mathbb{P}^N\) induced by the linear system \(|H|\) is birational to its image \(Y\) (in particular, \(X\to Y\) is the normalization). 
Suppose further that there is a smooth morphism \(t'\colon M\to P\) which is a lift of \(t\), i.e., \(t=t'|_X\). 
If the twisted normal bundle \(N_{X/M}\otimes\mathcal{O}_X(-H)\) is ample, then the dual variety \(Y^*\subseteq P^*\) of \(Y\subseteq P\) is a hypersurface.
\end{proposition}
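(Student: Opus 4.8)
The goal is to show that the dual variety $Y^* \subseteq P^*$ is a hypersurface, equivalently that its generic fiber under the conormal projection is zero-dimensional. I would argue via the conormal variety of $Y$. Recall that the conormal variety $\mathrm{Con}(Y) \subseteq P \times P^*$ is the closure of the set of pairs $(y, h)$ with $y$ a smooth point of $Y$ and $h$ a hyperplane tangent to $Y$ at $y$; it is always irreducible of dimension $N-1$, and $Y^*$ is a hypersurface precisely when the second projection $\mathrm{Con}(Y) \to P^*$ is generically finite (see \cite{Tev05}). The classical reflexivity theorem lets one read off the tangent space to $\mathrm{Con}(Y)$ at a general point, and hence the differential of this projection, in terms of second fundamental form data.

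The key device is to pull everything back to the \emph{smooth} model $X$. Since $X \to Y$ is the normalization and $t' \colon M \to P$ is a smooth lift of $t$, I would consider the incidence variety
\[
\widetilde{W} \coloneqq \{(x, h) \in X \times P^* \ :\ T_x X \subseteq \ker(h)\ \text{inside}\ T_{t(x)} P\},
\]
using the embedding $T_x X \hookrightarrow T_{t(x)}P$ coming from $dt'$; this is a projective bundle over $X$, namely $\mathbb{P}$ of the annihilator of $T_X$ in $t'^*\Omega_P$ restricted to $X$. Concretely, from the Euler sequence on $P$ and the conormal sequence $0 \to N^*_{X/M} \to t'^*\Omega_P|_X \to \Omega_X \to 0$, one computes that $\widetilde{W} \to X$ has fibers of dimension $N - 1 - \dim X + \mathrm{codim}_M X = N - 1 - \dim M + 2\,\mathrm{codim}_M X$... — more to the point, $\dim \widetilde{W} = N - 1$ when one accounts correctly, and $\widetilde{W}$ surjects onto (a dense subset of) $\mathrm{Con}(Y)$ since $t$ is birational onto $Y$. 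So it suffices to show the composite $\widetilde{W} \to P^*$ is generically finite.

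To control this map I would use the ampleness hypothesis on $N_{X/M} \otimes \mathcal{O}_X(-H)$. The line bundle $\mathcal{O}_{P^*}(1)$ pulls back to $\widetilde{W}$ as the tautological quotient bundle of the $\mathbb{P}$-bundle structure, twisted appropriately; tracking the twist through the Euler sequence shows it equals $\mathcal{O}_{\widetilde{W}/X}(1) \otimes (\text{pullback of } \mathcal{O}_X(H))$, where $\mathcal{O}_{\widetilde{W}/X}(1)$ is the relative tautological bundle for $\mathbb{P}\bigl((N^*_{X/M})^{\!*} \otimes \cdots\bigr)$ — i.e., the relevant bundle on $X$ whose projectivization is $\widetilde{W}$ is (a twist of) $N_{X/M}$. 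The condition that $N_{X/M}(-H)$ is ample is exactly what makes the tautological bundle of $\widetilde{W}$ big and the map to $P^*$ generically finite: a positive-dimensional fiber would force the tautological class to be trivial on a curve, contradicting bigness. This is in the same spirit as the hypersurface criteria in \cite[Section 2]{OCW16} and \cite{Ein86}, but with the normalization $X \to Y$ replacing the variety itself.

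\textbf{Main obstacle.} The delicate point is the passage between $X$ and its possibly-singular image $Y$: one must check that $\widetilde{W}$ genuinely dominates $\mathrm{Con}(Y)$ with generically finite (indeed birational) fibers, i.e., that a general tangent hyperplane to $Y$ is tangent along the image of a point where $t$ is an immersion and an isomorphism onto a neighborhood in $Y$. Since $t\colon X\to Y$ is the normalization and hence an isomorphism over a dense open subset of $Y$, and $t'$ is a genuine \emph{smooth} morphism (so $dt'$ is everywhere surjective, giving a well-defined sub-bundle $T_X \hookrightarrow t'^*T_P|_X$), this should go through — but it requires care that the generic point of $\mathrm{Con}(Y)$ is not supported over the non-normal locus of $Y$, which follows from irreducibility of $\mathrm{Con}(Y)$ together with the fact that the non-normal locus has positive codimension in $Y$. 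Once that dictionary is in place, the ampleness input converts directly into generic finiteness, and the conclusion follows. I would also need the standard fact (reflexivity / \cite[Theorem 5.11]{Tev05}) that $Y^*$ is a hypersurface iff this conormal projection is generically finite, which handles the characteristic-zero subtleties for free.
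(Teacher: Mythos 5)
Your overall route is the same as the paper's: form an incidence variety over the normalization \(X\), identify the pullback of \(\mathcal{O}_{P^*}(1)\) with a twisted tautological class, use the positivity hypothesis to force the projection to \(P^*\) to be finite, and conclude \(\dim Y^*=N-1\) because this incidence variety has dimension \(N-1\) and dominates the conormal variety of \(Y\) (here the birationality of \(t\) onto \(Y\) is used exactly as you indicate). The paper carries this out with \(W=\mathbb{P}_X(\mathcal{N}(-H))\), where \(\mathcal{N}\) is the cokernel of \(T_X\to t^*T_P\), embedded into \(X\times{\mathbb{P}^N}^*\) via the surjection \(V=H^0(X,H)\twoheadrightarrow\mathcal{N}(-H)\) coming from the Euler sequence.

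However, the step where your hypothesis actually enters is garbled and, as written, would fail. The variety \(\widetilde W\) you define is the projectivization of the quotient sheaf \(\mathcal{N}(-H)\) (equivalently of the annihilator of \(T_X\) inside \(t^*\Omega_P\), suitably twisted), \emph{not} of a twist of \(N_{X/M}\): these sheaves have ranks \(N-\dim X\) and \(\dim M-\dim X\), which differ whenever \(\dim M>N\) (smoothness of \(t'\) only forces \(\dim M\geq N\)), so with your identification both the count \(\dim\widetilde W=N-1\) and the embedding into \(X\times P^*\) break down; relatedly, the sequence \(0\to N^*_{X/M}\to t'^*\Omega_P|_X\to\Omega_X\to 0\) you invoke is not exact in general. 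The correct mechanism---and the place where smoothness of \(t'\) is genuinely used---is that \(T_M|_X\to t'^*T_P|_X\) is surjective, hence the induced map \(N_{X/M}(-H)\to\mathcal{N}(-H)\) is surjective; ampleness therefore passes to the quotient, so \(\mathcal{O}_{\widetilde W}(1)\), which coincides with the pullback of \(\mathcal{O}_{P^*}(1)\), is ample and the map \(\widetilde W\to P^*\) is finite. This also replaces your ``a positive-dimensional fiber would make the tautological class trivial on a curve, contradicting bigness'' argument, which is not valid as stated (big classes can restrict trivially to curves); ampleness is what you should use. Two smaller cautions: \(\widetilde W\to X\) need not be a projective bundle, since \(t\) need not be an immersion and \(\mathcal{N}\) may have torsion (the paper explicitly allows \(W\) to be a very singular projective scheme), though this does not affect the skeleton of the argument. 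Once the surjection \(N_{X/M}(-H)\twoheadrightarrow\mathcal{N}(-H)\) is inserted, your proposal becomes the paper's proof.
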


\begin{proof}
Let \(V=H^0(X,H)\) with \(\dim(V)=N+1\).
First, we have the following natural Euler sequence 
\[
0\to \mathcal{O}_P(-1)\to V\to T_P(-1)\to 0.
\]
Pulling it back to \(X\), we obtain the surjection
\[
V\to t^*T_P(-H)\to 0.
\]
Second, we have the following natural short exact sequence of coherent sheaves
\[
0\to T_X(-H)\to t^*T_P(-H)\to \mathcal{N}(-H)\to 0,
\]
where \(\mathcal{N}(-H)\) is not necessarily locally free as the normalization map \(t\) is not necessarily unramified. 
Hence, the composite surjection 
\(V\twoheadrightarrow \mathcal{N}(-H)\) 
gives rise to an injection 
\[W\coloneqq\mathbb{P}_X(\mathcal{N}(-H))\hookrightarrow X\times {\mathbb{P}^N}^*.\]
Note that here \(W\) is merely a projective scheme, possibly very singular. 
On the other hand, as \(t'\colon M\to P\) is smooth, we have the surjective map \(T_{M}\to t'^*T_P\).
Restricting the map to \(X\), we obtain the following commutative diagram of short exact sequences
\[
\xymatrix{
0\ar[r]&T_X(-H)\ar[r]\ar@{=}[d]&T_{M}|_X(-H)\ar@{->>}[d]\ar[r]&N_{X/M}(-H)\ar[r]\ar@{->>}[d]&0\\
0\ar[r]&T_X(-H)\ar[r]&t^*T_P(-H)\ar[r]&\mathcal{N}(-H)\ar[r]&0
}
\]
where the first horizontal sequence is the normal bundle sequence for the smooth projective variety \(X\).
Now that the middle vertical one is a surjection, we obtain that \(N_{X/M}(-H)\to \mathcal{N}(-H)\) is also a surjection.
By the ampleness of \(N_{X/M}(-H)\), we see that the coherent sheaf \(\mathcal{N}(-H)\) is ample in the sense that \(\mathcal{O}_W(1)\) is ample.
Let us consider the following commutative diagram
\[
\xymatrix{
X\times{\mathbb{P}^N}^*\ar[r]&Y\times{\mathbb{P}^N}^*\ar[r]&{\mathbb{P}^N}^*\\
W\ar[r]\ar@{^(->}[u]\ar[d]&W'\ar[r]\ar@{^(->}[u]\ar[d]&Y^*\ar@{^(->}[u]\\
X\ar[r]&Y&
}
\]
where \(W'\) and \(Y^*\) are the images of \(W\) along the composite maps \(W\to X\times{\mathbb{P}^N}^*\to Y\times{\mathbb{P}^N}^*\) and \(W\to X\times{\mathbb{P}^N}^*\to {\mathbb{P}^N}^*\).
By definition, \(W'\) is the conormal variety of \(Y\) which is the closure of \(\mathbb{P}_{U}(\mathcal{N}_{U/P}\otimes\mathcal{O}_U(-1))\) in \(Y\times {\mathbb{P}^N}^*\) where \(U\) is the smooth locus of \(Y\), and \(Y^*\) is the dual variety of \(Y\).
Now that \(\mathcal{O}_W(1)\) (which coincides with the pullback of \(\mathcal{O}_{{\mathbb{P}^N}^*}(1)\)) is ample, we see that \(W\to Y^*\) is a finite (and thus surjective) morphism; in particular, the equality 
\[
\dim(Y^*)=\dim (W)=\dim(X)+\text{rank}(\mathcal{N}(-H))-1=\dim(P)-1=N-1
\]
concludes our proposition.
\end{proof}

\begin{proof}[Proof of Theorem \ref{thm-del-Pezzo}]
Let \(X\) be a del Pezzo manifold of dimension \(n\) such that \(-K_X\sim (n-1)H_X\) where \(H_X\) is an integral ample divisor. 
We may assume that \(n\geq 3\) in view of  \cite[Theorem 1.2]{HLS22}. 
It is known that \(1\leq d\leq 8\).
If  \(d\geq 6\), then \(T_X\) is big (see \cite[Proposition 5.16]{HLS22}). 
If \(d=5\), then it follows from \cite[Theorem 5.4]{HLS22} and \cite[Propositino 3.15 and Example 4.5]{Liu23} that \(T_X\) is big. 
If $d=3$, by \cite[Theorem 1.4]{HLS22}, $T_X$ is not pseudo-effective.
Suppose that \(d=4\). 
Then $X$ is a smooth complete intersection of two hyperquadrics.
By \cite{BEHLV23}, \(T_X\) is \(\mathbb{Q}\)-effective but not big; alternatively, applying Theorem \ref{thm-complete-intersection}, we have \(T_X\) is not big.  
So we are left to consider \(d=1\) or 2.

\par \vskip 0.5pc \noindent
\textbf{The case $d=2$.} 
Then $X$ is a double cover of $\mathbb{P}^n$ branched along a smooth hypersurface of degree 4. 
Its VMRT $\mathcal{C}_x\subset \mathbb{P}(\Omega_{X,x})$ at a general point $x\in X$ is a smooth complete intersection of multi-degree $(3,4)$ by \cite[Theorem 1.1]{HK13} and hence not dual defective by \cite[Theorem 5.11]{Tev05}. Thanks to \cite[Proposition 4.16]{Liu23} and the Semicontinuity theorem, we only need to show a special one in the deformation family of del Pezzo manifolds of degree 2 having the non-big tangent bundle. 
Consider the del Pezzo manifold $X$ of degree 2 defined by the following equation in the weighted projective space $\mathbb{P}(2,1,...,1)$
$$
x_0^2+x_1^4+...+x_{n+1}^4=0.
$$
There is a finite morphism from $X$ to the hyperquadric $Q^n$ defined by the following
$$
\begin{aligned}
X&\longrightarrow Q^n=(x_0^2+x_1^2+...+x_{n+1}^2=0)\subset \mathbb{P}^{n+1}\\
(t_0:t_1:...:t_{n+1})&\mapsto (t_0:t_1^2:...:t_{n+1}^2).
\end{aligned}
$$
Hence $T_X$ is not big by Theorem \ref{thm-main} and Lemma \ref{l:trivial-VMRT}.

\par \vskip 0.5pc \noindent
\textbf{The case $d=1$.} We apply Proposition \ref{p:dual-defect} to show that the VMRT at a general point of $X$ is not dual defective. 
By \cite[Propositions 5.2 and 6.7]{HK15}, the tangent map \(\tau_x\) (which is a normalization morphism due to \cite{Keb02} and \cite{HM04}) along a general point sends a smooth complete intersection \(\mathcal{K}_x\) of multi-degree \((4,5,6)\) in the weighted projective space \(\mathbb{P}\coloneqq\mathbb{P}(2,1^n)\) to a codimension two subvariety \(\mathcal{C}_x\) in \(\mathbb{P}^{n-1}\) such that \(\tau_x\) is the restriction of the natural projection \(\mathbb{P}(2,1^{\oplus n})\backslash\{[1:0:\cdots:0]\}\to \mathbb{P}^{n-1}\). 
Since the twisted normal bundle \(N_{\mathcal{K}_x/\mathbb{P}}(-1)\) of the smooth complete intersection of multi-degree \((4,5,6)\) is isomorphic to \(\mathcal{O}_{\mathcal{K}_x}(3)\oplus\mathcal{O}_{\mathcal{K}_x}(4)\oplus\mathcal{O}_{\mathcal{K}_x}(5)\) which is ample, the VMRT \(\mathcal{C}_x\) is not dual defective by applying Proposition \ref{p:dual-defect}.

Using \cite[Proposition 4.16]{Liu23}, we only need to show a special one in the deformation family of del Pezzo manifolds of degree 1 having the non-big tangent bundle.  
In fact, in \cite[Proposition 4.16]{Liu23}, the assumption of smoothness of the VMRT is not needed if the VMRT of any element in the deformation family at a general point is not dual defective. 
Consider the del Pezzo manifold $X$ of degree 1 defined by the following equation in the weighted projective space $\mathbb{P}(3,2,1,...,1)$
$$
x_0^2+x_1^3+x_2^6+...+x_{n+1}^6=0. 
$$
Let $Y$ be the smooth Fermat hypersurface of degree 6 in the projective space $\mathbb{P}^{n+1}$ defined by $$
x_0^6+x_1^6+x_2^6+...+x_{n+1}^6=0. 
$$
Then there is a finite morphism $g\colon Y\to X$ sending the point $(t_0:t_1:t_2:...:t_{n+1})$ to the point $(t_0^3:t_1^2:t_2:...:t_{n+1})$. Consider the natural injection 
$$0\to g^*\Omega_X\to \Omega_Y.$$
Taking the $(n-1)$-th exterior power of the above exact sequence and then tensoring  with $(n-1)H_Y$, we get an injection:
$$
0\to g^*T_X\to T_Y(3).
$$
Noting that the pushforward $g_*\mathcal{O}_Y$ has a direct summand $\mathcal{O}_X$, we have
$$
H^0(X,\Sym^r T_X)\hookrightarrow H^0(Y,g^*(\Sym^r T_X))\hookrightarrow H^0(Y,\Sym^r (T_Y(3)))\ \ \forall r\geq 1.
$$
Observe that $H^0(Y,\Sym^r (T_Y(3)))=0$ for $r\geq 1$ (see \cite[Theorem 1.3]{HLS22}). 
This implies that $H^0(X,\Sym^r T_X)=0$ for any $r\geq 1$; hence, $T_X$ is not big.
\end{proof}

\subsection{Mukai manifolds, Proof of Proposition \ref{prop-Mukai-nonbig}}

Before proving Proposition \ref{prop-Mukai-nonbig}, we first consider Gushel-Mukai manifolds.
We note that for a Gushel-Mukai sixfold, it admits a double cover over the Grassmannian \(\text{Gr}(2,5)\), the VMRT of which along a general point is self-dual and hence dual defective (cf.~Example \ref{e:quintic}); in particular, we could not apply our Theorem \ref{thm-main}.

\begin{proposition}[Gushel-Mukai manifold]\label{prop-GM}
    Let $X$ be a Gushel-Mukai manifold, i.e., a Mukai manifold of genus 6. 
    If \(\dim(X)\neq 6\), then the tangent bundle $T_X$ is not big.
\end{proposition}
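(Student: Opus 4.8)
The plan is to run, for the Gushel--Mukai $n$-folds with $3\le n\le 5$, the same strategy as in the proof of Theorem~\ref{thm-del-Pezzo}: exhibit one member of the deformation family that carries a non-trivial finite cover, rule out bigness of its tangent bundle by Theorem~\ref{thm-main}, and then spread the conclusion over the whole family. Recall that a Gushel--Mukai manifold $X$ of dimension $n$ satisfies $-K_X\sim(n-2)H_X$ with $H_X^n=10$, and that $X$ is either \emph{ordinary} or \emph{special}. In the special case the Gushel map realizes $X$ as a finite double cover $\gamma\colon X\to M_n$, where $M_n$ is a general codimension $6-n$ linear section of $\mathrm{Gr}(2,5)\subseteq\mathbb{P}^9$ and the branch divisor lies in $|2H_{M_n}|$. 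An adjunction computation gives $-K_{M_n}\sim(n-1)H_{M_n}$ and $H_{M_n}^n=5$, so $M_n$ is a del Pezzo $n$-fold of degree $5$; in particular $M_n$ has Picard number $1$, $M_n\not\cong\mathbb{P}^n$, and for $n\le 5$ its VMRT at a general point is not dual defective by Example~\ref{e:quintic}.

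First I would check that the VMRT at a general point of an \emph{arbitrary} Gushel--Mukai $n$-fold, $3\le n\le 5$, is not dual defective. Since the Fano index equals $n-2$, for $n=3$ the minimal rational curves are conics and for $n=4$ they are lines, so in both cases the VMRT is $0$-dimensional; the dual variety of a non-empty finite subset of a projective space is a union of hyperplanes, hence of codimension $1$, so it is not dual defective. For $n=5$ the VMRT is a curve, and an irreducible reduced curve in a projective space is never dual defective unless it is a line (a positive-dimensional tangency locus of a curve would be a linear subvariety contained in the curve, forcing equality); it therefore suffices to see that this curve is not a union of lines, which I would deduce from the explicit realization of $X$ as a quadric section of a linear section of $\mathrm{Gr}(2,5)$ and the induced description of the VMRT as a linear-plus-quadratic section of the Segre variety $\mathbb{P}^1\times\mathbb{P}^2\subseteq\mathbb{P}^5$ (the VMRT of $\mathrm{Gr}(2,5)$).

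With this in place, applying Theorem~\ref{thm-main} to $\gamma\colon X^{\mathrm{sp}}\to M_n$ for a special Gushel--Mukai $n$-fold $X^{\mathrm{sp}}$ settles that member: if $T_{X^{\mathrm{sp}}}$ were big, then, since $M_n\not\cong\mathbb{P}^n$ and the VMRTs of both $X^{\mathrm{sp}}$ and $M_n$ at general points are not dual defective, $\gamma$ would be an isomorphism, contradicting $\deg\gamma=2$; hence $T_{X^{\mathrm{sp}}}$ is not big. Finally, since the smooth Gushel--Mukai $n$-folds of a fixed dimension form an irreducible family containing both the ordinary and the special ones, every such $X$ is connected to a smooth special member through a family of smooth Gushel--Mukai $n$-folds; as every member of this family has VMRT not dual defective at a general point and the special endpoint has non-big tangent bundle, \cite[Proposition 4.16]{Liu23} together with the semicontinuity theorem shows that $T_X$ is not big. (For $n=3$ the conclusion also follows from known results on Fano threefolds.)

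The step I expect to be the main obstacle is the VMRT analysis for $n=5$: one must show that the variety of minimal rational tangents of a Gushel--Mukai fivefold at a general point is a curve which is not a union of lines, and in particular pin down the genuine family of minimal rational curves (ruling out, for instance, a residual family of conics arising as preimages of lines under the double cover, which would make the VMRT too large). Everything else — the del Pezzo identification of $M_n$, the double cover description of special Gushel--Mukai manifolds, the irreducibility of the relevant moduli, and the invocation of Theorem~\ref{thm-main} together with the deformation--semicontinuity machinery already used for del Pezzo manifolds — is classical or parallels arguments present in the paper.
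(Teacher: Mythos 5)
Your overall strategy coincides with the paper's: the paper also applies Theorem~\ref{thm-main} to a \emph{special} Gushel--Mukai manifold, viewed via the Gushel map as a double cover of a linear section of \(\mathrm{Gr}(2,5)\) (a quintic del Pezzo manifold, whose VMRT is not dual defective in dimension \(\le 5\), cf.\ Example~\ref{e:quintic}), and then spreads the non-bigness over the whole family by \cite[Proposition 4.16]{Liu23}. For \(n=3,4\) your dimension count (VMRT zero-dimensional, hence not dual defective) is a perfectly good, slightly more elementary substitute for the paper's references.

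The genuine gap is exactly the step you flag, the case \(n=5\), and it is worse than you indicate: the explicit realization you propose to use (VMRT cut out on the Segre \(\mathbb{P}^1\times\mathbb{P}^2\subseteq\mathbb{P}^5\) by linear and quadratic equations) is available only for \emph{ordinary} Gushel--Mukai fivefolds, i.e.\ quadric sections of a linear section of \(\mathrm{Gr}(2,5)\), whereas Theorem~\ref{thm-main} must be applied to a \emph{special} fivefold, the one carrying the double cover; so your sketch does not establish non-dual-defectiveness of the VMRT for precisely the member you need (nor for all members of the family, which your final deformation step also requires). The paper closes this in two moves: for the ordinary members it identifies the VMRT as the intersection of the VMRT of the linear section of \(\mathrm{Gr}(2,5)\) with that of the quadric, which is smooth and not dual defective by \cite[Proposition 3.1]{Ein86} and \cite[Theorem 5.3]{Tev05} (in particular a smooth non-linear curve is never dual defective, which settles your worry about unions of lines); it then invokes \cite[Lemma 4.14]{Liu23} to transfer non-dual-defectiveness from the ordinary to the special members. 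To complete your argument you would need this transfer step (or a direct analysis of the lines through a general point of a special fivefold, e.g.\ as lines of the quintic del Pezzo tangent to the branch quadric section), not just the ordinary-case computation.
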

\begin{proof}
    Let \(\text{Gr}(2,5)\) be the Grassmannian of 2-dimensional subspaces of a complex vector space of dimension 5, viewed in \(\mathbb{P}(\wedge^2\mathbb{C}^5)\cong\mathbb{P}^9\) via the Pl\"ucker embedding.
Let \(\text{CGr}(2,5)\subseteq\mathbb{P}(\mathbb{C}\oplus \wedge^2\mathbb{C}^5)\cong\mathbb{P}^{10}\) be the cone over \(\text{Gr}(2,5)\) of vertex \(\nu\coloneqq\mathbb{P}(\mathbb{C})\).
A smooth Gushel-Mukai (GM for short)  variety of dimension \(3\leq n\leq 6\) is a smooth dimensionally transverse intersection
\[
X=\text{CGr}(2,5)\cap\mathbb{P}(W)\cap Q,
\]
where \(W\) is a vector space of dimension \(n+5\) of \(\mathbb{C}\oplus\wedge^2\mathbb{C}^5\) and \(Q\) is a quadric hypersurface in \(\mathbb{P}(W)\cong\mathbb{P}^{n+4}\).
Smooth GM varieties are isomorphic to either quadric sections of a linear section of \(\text{Gr}(2,5)\subseteq\mathbb{P}^9\) (called ordinary GM varieties) when \(3\leq n\leq 5\), or to double covers of a linear section of \(\text{Gr}(2,5)\) branched along a quadric section (called special GM varieties). 
We note that such \(X\) is a Fano manifold of Picard number 1 and of Fano index \(n-2\).
By our assumption, \(n\neq 6\). 
So we may pick an ordinary GM manifold \(X\);  then its VMRT is the intersection of the VMRT of the linear section of \(\text{Gr}(2,5)\) and the VMRT of the quadric hypersurface, which is smooth and not dual defective (cf.~\cite[Proposition 3.1]{Ein86} and \cite[Theorem 5.3]{Tev05}). 
In particular, it follows from \cite[Lemma 4.14]{Liu23} that the VMRT of a special GM manifold is not dual defective, either.
By Theorem \ref{thm-main} and \cite[Proposition 4.16]{Liu23}, the tangent bundle \(T_X\) is not big.
\end{proof}

\begin{proof}[Proof of Proposition \ref{prop-Mukai-nonbig}]
By \cite[Corollary 1.2 or Corollary 1.3]{HL23}, we may assume that the dimension of $X$ is at least 4.
Recall that \(g(X)\geq 2\).
We refer the reader to \cite{Muk89} for a full classification of Mukai manifolds. 
If $g(X)=6$, then $X$ is a Gushel-Mukai manifold and the result follows from Proposition \ref{prop-GM}. 
    If $g(X)=5$ (resp.\,$g(X)=4$), then $X$ is a smooth complete intersection of three quadrics (resp. smooth complete intersection of a cubic and a quadric), which has a non-big tangent bundle by Theorem \ref{thm-complete-intersection}.

\par \vskip 0.5pc \noindent
\textbf{The case $g(X)=3$.} 
Then $X$ is either a smooth quartic hypersurface in a projective space or a double cover of a quadric $Q^n$ branch along the intersection of $Q^n$ with a quartic hypersurface. 
In the former case,  \(T_X\) is not big by  \cite[Theorem 1.3]{HLS22}.
    So we may assume that \(X\) is a double cover over a quadric of dimension \(\ge4\). 
     By \cite[Theorem 1.1 and Proposition 4.3]{Kim16}, the VMRT along a general point of \(X\) is smooth and there is a special one whose VMRT along a general point is a smooth complete intersection which is not dual defective. 
     By Theorem \ref{thm-main}, the tangent bundle of this special one is not big; hence our result follows from \cite[Proposition 4.16]{Liu23}.

\par \vskip 0.5pc \noindent
\textbf{The case $g(X)=2$.} 
Then $X$ is a smooth hypersurface of degree 6 in the weighted projective space $\mathbb{P}(3,1,...,1)$. 
Let \([y:x_0:\cdots:x_n]\) be the coordinates of the weighted projective space.
After a suitable coordinate base change, we may assume that \(X\) is defined by the equation \(y^2+F_6(x_0,\cdots,x_n)=0\) where \(F_6\) is a degree 6 homogeneous polynomial.
Let $X'$ be the hypersurface of degree 6 in the projective space $\mathbb{P}^{n+1}$ defined by $Y^6+F_6(y_0,\cdots,y_n)=0$, where \([Y:y_0:\cdots:y_n]\) are the coordinates of \(\mathbb{P}^{n+1}\). 
Then there is a finite morphism $g\colon X'\to X$ sending the point $[Y:y_0:\cdots:y_n]$ to the point $[Y^3:y_0:\cdots:y_n]$. 
Clearly, \(X'\) is also smooth. 
Consider the natural injection 
$$0\to g^*\Omega_X\to \Omega_{X'}.$$
Taking the $(n-1)$-th exterior power of the above exact sequence and then tensoring  with $(n-2)H_X$, we get an injection:
\[0\to g^*T_X\to T_{X'}(2).\]
By \cite[Theorem 1.3]{HLS22}, for any \(k\in\mathbb{N}\), we have \(H^0(X',\Sym^k(T_{X'}(3)))=0\).
Therefore, \(T_{X'}(2)\) is not pseudo-effective, for otherwise, \(T_{X'}(3))\) would be big, a contradiction.  
Applying \cite[Lemma 2.2]{HLS22} to the injection \(0\to g^*T_X\to T_{X'}(2)\), we see that \(T_X\) is not big (and even not pseudo-effective), either. 
\end{proof}

\appendix

\section{Finite covering of intersections of two quadrics}

\begin{center}
{\small \sc By Jie Liu}
\end{center}

\renewcommand{\thethm}{\Alph{section}.\arabic{thm}}

The aim of this appendix is to prove the following result:

\begin{theorem}\label{thm-intersection-quadrics}
	Let $f\colon Y\rightarrow X$ be a general smooth finite cover of a general complete intersection $X$ of two quadrics in $\mathbb{P}^{n+2}$ with $n\geq 2$. Then $T_Y$ is not pseudo-effective.
\end{theorem}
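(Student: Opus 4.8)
The strategy is to transport the structure of $\mathbb{P}(T_X)$ established in \cite{BEHLV23} across the cover $f$ and then to produce a movable curve class on a birational model of $\mathbb{P}(T_Y)$ against which the tautological class $\xi_Y\coloneqq\mathcal{O}_{\mathbb{P}(T_Y)}(1)$ has negative degree; since a divisor class is pseudo-effective precisely when it meets every movable curve non-negatively, this will show that $T_Y$ is not pseudo-effective. Since $\dim X=n$ we have $\dim\mathbb{P}(T_X)=2n-1$, and by \cite{BEHLV23} the completely integrable (Lagrangian) system on $T^{*}X$ attached to the pencil of quadrics realizes the Iitaka fibration of $\eta\coloneqq\mathcal{O}_{\mathbb{P}(T_X)}(1)$ as a rational map $\Phi\colon\mathbb{P}(T_X)\dashrightarrow\mathbb{P}^{n-1}$. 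Concretely, on a resolution $p_1\colon\Gamma\to\mathbb{P}(T_X)$ of $\Phi$, with $p_2\colon\Gamma\to\mathbb{P}^{n-1}$, one has $2\,p_1^{*}\eta\sim p_2^{*}H+E_\Gamma$, where $H$ is the hyperplane class, $E_\Gamma\geq 0$ is $p_1$-exceptional over the indeterminacy locus $Z$ of $\Phi$, and — the key input — $\operatorname{codim}_{\mathbb{P}(T_X)}Z\geq 2$ by \cite[Corollary 3.1]{BEHLV23}.

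I would then set up the birational geometry of the cover. The differential is an injection $T_Y\hookrightarrow f^{*}T_X$ with torsion cokernel supported on the ramification divisor $R$ of $f$, which after an elementary transformation is a line bundle on $R$ when $f$ is a general, hence simply ramified, cover. The resulting birational map $\mathbb{P}(f^{*}T_X)\dashrightarrow\mathbb{P}(T_Y)$ is resolved by a smooth $(2n-1)$-fold $W$ with morphisms $\beta\colon W\to\mathbb{P}(f^{*}T_X)$ and $\alpha\colon W\to\mathbb{P}(T_Y)$, and one obtains
\[
\alpha^{*}\xi_Y\sim\beta^{*}\widetilde\eta-E_W ,
\]
where $\widetilde\eta$ is the pullback of $\eta$ to $\mathbb{P}(f^{*}T_X)$ under the natural finite morphism $\mathbb{P}(f^{*}T_X)\to\mathbb{P}(T_X)$, and $E_W\geq 0$ is $\beta$-exceptional, lying over a subvariety of $\mathbb{P}(f^{*}T_X)$ birational to $R$; for a general cover $E_W$ is moreover a projective bundle over that subvariety on which $\beta^{*}\widetilde\eta$ restricts to $\mathcal{O}_R(R)$. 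Finally I form the fibre product $V\coloneqq W\times_{\mathbb{P}(T_X)}\Gamma$, using $\beta$ followed by $\mathbb{P}(f^{*}T_X)\to\mathbb{P}(T_X)$ on one side and $p_1$ on the other, with projections $q_1\colon V\to W$ (birational) and $q_2\colon V\to\Gamma$ (finite); then $q_1^{*}\beta^{*}\widetilde\eta=q_2^{*}p_1^{*}\eta$, whence $2\,q_1^{*}\beta^{*}\widetilde\eta\sim q_2^{*}p_2^{*}H+q_2^{*}E_\Gamma$.

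Fix an ample class $A$ on $W$ and consider the curve class $C\coloneqq q_1^{*}A^{\,n-1}\cdot\bigl(q_2^{*}p_2^{*}H\bigr)^{\,n-1}$ on $V$; it is a limit of complete intersections of ample divisor classes, so its pushforward $(\alpha\circ q_1)_{*}C$ lies in the closure of the cone of movable curves on $\mathbb{P}(T_Y)$, and it suffices to prove
\[
\xi_Y\cdot(\alpha\circ q_1)_{*}C=q_1^{*}\bigl(\beta^{*}\widetilde\eta-E_W\bigr)\cdot C<0 .
\]
Substituting $2\,q_1^{*}\beta^{*}\widetilde\eta\sim q_2^{*}p_2^{*}H+q_2^{*}E_\Gamma$ and expanding, there are three contributions. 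The $q_2^{*}p_2^{*}H$-term vanishes because $H^{n}=0$ on $\mathbb{P}^{n-1}$. The $q_2^{*}E_\Gamma$-term vanishes by a dimension count: cutting $E_\Gamma$ with $n-1$ general members of $|p_2^{*}H|$ drops its dimension below the value needed for a nonzero pushforward to $\mathbb{P}(T_X)$, because $\operatorname{codim}Z\geq 2$, and the fibre-product projections do not re-expand this cycle precisely because for a general cover the ramification of $f$ meets $Z$ transversally. For the $q_1^{*}E_W$-term, one trades each factor $q_2^{*}p_2^{*}H$ for $2\,q_1^{*}\beta^{*}\widetilde\eta$ (modulo $E_\Gamma$-terms, which vanish as above), reducing it to a positive multiple of $E_W\cdot A^{\,n-1}\cdot(\beta^{*}\widetilde\eta)^{\,n-1}$; by the projection formula together with the description of $E_W$ this equals a positive multiple of $R^{n}$, which is strictly positive since $R$ is big (indeed ample in the standard constructions of covers). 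Combining the three contributions gives $q_1^{*}(\beta^{*}\widetilde\eta-E_W)\cdot C<0$, so $\xi_Y$, and hence $T_Y$, is not pseudo-effective.

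The main obstacle will be the simultaneous control of the two exceptional divisors $E_\Gamma$ and $E_W$. On one side one must understand $E_\Gamma$ — equivalently the indeterminacy locus $Z$ of the Lagrangian fibration — well enough to annihilate its contributions after intersecting with the pulled-back hyperplanes; this relies on the explicit description in \cite{BEHLV23} and the bound $\operatorname{codim}Z\geq 2$. On the other side one must pin down $E_W$ and the restriction of $\beta^{*}\widetilde\eta$ to it finely enough to identify the surviving intersection number with a positive multiple of $R^{n}$; this needs the ramification of $f$ to be simple and in general position relative to $Z$. Both of these are exactly what the hypothesis that $f$ is a \emph{general} smooth finite cover supplies.
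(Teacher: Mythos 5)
Your overall strategy (elementary transformation, the Iitaka fibration $2\zeta_X\sim\varphi^*\mathcal{O}(1)$ from \cite{BEHLV23}, and a movable curve class on a birational model against which the tautological class of $\mathbb{P}(T_Y)$ is negative) is coherent, but the two decisive steps of your intersection computation are not justified, and they cannot be extracted from the hypotheses as you suggest. First, the vanishing of the $q_2^*E_\Gamma$-contribution. Knowing only $\operatorname{codim}_{\mathbb{P}(T_X)}Z\geq 2$ does not force the dimension drop you need after cutting with $n-1$ general members of $|p_2^*H|$: a point $z\in Z$ lies in $p_1(p_2^{-1}(L))$ for \emph{every} linear subspace $L$ as soon as $p_2$ maps the positive-dimensional fibre $p_1^{-1}(z)$ onto $\mathbb{P}^{n-1}$, so the relevant locus over $Z$ need not shrink at all. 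Worse, what your curve class actually requires is that the resulting cycle be contracted by $q_1$ to your model $W$, and $\beta$ has $(n-1)$-dimensional fibres over the centre of the elementary transformation (which sits over the ramification), so cycles lying over $Z$ and over the ramification can gain dimension upstairs. Your appeal to ``the ramification of $f$ meets $Z$ transversally for a general cover'' does not close this: the relevant object is not the branch divisor $B$ itself but the section $s(B)\subset\mathbb{P}(T_X)$ determined by the quotient $T_X|_B\rightarrow N_{B/X}$, which is a highly constrained, non-general subvariety of $\mathbb{P}(T_X)$; controlling its position relative to $Z$ and to the members of $|2\zeta_X|$ is exactly the content of Lemma \ref{l.dominant}, whose proof is a genuine induction using the involution splitting $T_X|_{X'}\cong T_{X'}\oplus N_{X'/X}$ and \cite[Proposition 7.4]{BEHLV23}, and your argument contains no substitute for it.

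Second, the positivity of the $E_W$-term has the same problem plus two further gaps: trading each factor $q_2^*p_2^*H$ for $2q_1^*\beta^*\widetilde\eta$ discards exactly the $E_\Gamma$-terms whose vanishing is the unproven point above; and identifying $E_W\cdot A^{n-1}\cdot(\beta^*\widetilde\eta)^{n-1}$ with a positive multiple of $R^n$ presupposes that the cokernel of $T_Y\rightarrow f^*T_X$ is the line bundle $\mathcal{O}_R(R)$ on all of $R$ and that $E_W$ is a $\mathbb{P}^{n-1}$-bundle over a copy of $R$. For a general finite cover this holds only off a codimension-two subset (this is why the appendix works over $Y_{\circ}=Y\setminus\overline{W}$), and $R^n>0$ is not granted either: $Y$ need not have Picard number one, and ``ample in the standard constructions of covers'' is weaker than the stated generality. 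For comparison, the appendix avoids the global intersection computation altogether: using that a general fibre of $\varphi$ is, up to a codimension-two subset, a quasi-\'etale quotient of an abelian variety (\cite[Theorem, c)]{BEHLV23}), it produces a dominating family of curves contained in general fibres, avoiding $Z\cup p_X^{-1}(W)$, and --- via Lemma \ref{l.dominant} --- passing through $s(B)$; such curves satisfy $\zeta_X\cdot C_t=0$, their lifts meet the exceptional divisor of the elementary transformation, and $g^*\zeta_Y\cdot C_{\tilde t}<0$ follows at once. Any repair of your computation would require proving a statement of the strength of Lemma \ref{l.dominant} anyway, so the proposal as it stands has a genuine gap precisely where the paper's new input lies.
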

Here by a \emph{general finite cover} we mean that there exists an irreducible component $B$ of the branch locus of $f$ such that $B$ is a general smooth element in the complete linear system $|\mathcal{O}_X(d)|$ for some $d\geq 1$, where $\mathcal{O}_X(1)\cong \mathcal{O}_{\mathbb{P}^{n+2}}(1)|_X$.

\subsection{Intersection of two quadrics}

Let $X$ be a smooth complete intersection of two quadrics in $\mathbb{P}^{n+2}$ with $n\geq 2$. In an appropriate system of coordinates $(x_0,\dots, x_{n+2})$, the variety $X$ is defined by the two equations $q_1=q_2=0$, where 
\begin{center}
	$q_1=\sum x_i^2$, $q_2=\sum \mu_i x_i^2$ with $\mu_i$ distinct.
\end{center}
Let $\zeta_X$ be the tautological divisor of $\mathbb{P}(T_X)$. By \cite[Theorem, a)]{BEHLV23}, the complete linear system $|2\zeta_X|$ is defined by a dominant rational map
\[
\varphi\colon\mathbb{P}(T_X)\dashrightarrow \mathbb{P}^{n-1}
\]
such that \(2\zeta_X\sim\varphi^*\mathcal{O}(1)\), noting that \(2\zeta_X\) is movable (see \cite[Corollary 3.3]{BEHLV23}). 
Denote by $Z$ the base locus of $|2\zeta_X|$. Let $B\in |\mathcal{O}_X(d)|$ be a general smooth hypersurface of degree $d\geq 1$ and let $s\colon B\rightarrow \mathbb{P}(T_X)$ be the section corresponding to the natural quotient $T_X|_B\rightarrow N_{B/X}$.

\begin{lemma}
	\label{l.dominant}
	$\varphi(s(B))=\mathbb{P}^{n-1}$.
\end{lemma}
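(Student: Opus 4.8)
The plan is to show that $s(B)$ cannot be contained in the base locus $Z$ of $|2\zeta_X|$, and more precisely that $\varphi|_{s(B)}$ is dominant onto $\mathbb{P}^{n-1}$. First I would recall from \cite{BEHLV23} the explicit description of the rational map $\varphi$: on the affine cone picture, a cotangent/tangent vector at a point $x\in X$ is sent to an invariant built from the quadratic forms $q_1=\sum x_i^2$ and $q_2=\sum\mu_ix_i^2$, so that $\varphi$ records, roughly, the "spectral" coordinates of the pencil $\langle q_1,q_2\rangle$ restricted to a tangent direction. The key point is that $Z=\mathrm{Bl}(|2\zeta_X|)$ has codimension at least $2$ in $\mathbb{P}(T_X)$ (by \cite[Corollary 3.1]{BEHLV23} or the analogous statement quoted there), so a general hypersurface section should meet it in the expected dimension only.

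The main step is a dimension count combined with a genericity/monodromy argument. Since $B$ is a general smooth element of $|\mathcal{O}_X(d)|$, as $B$ varies the sections $s(B)\subseteq\mathbb{P}(T_X)$ sweep out a subvariety whose union dominates $\mathbb{P}(T_X)$ (indeed, through a general point $v\in\mathbb{P}(T_{X,x})$ one can find a smooth $B\in|\mathcal{O}_X(d)|$ through $x$ whose normal direction at $x$ is $v$, because $d\geq 1$ makes $|\mathcal{O}_X(d)|$ sufficiently ample to prescribe a point and a tangent hyperplane generically). Hence for general $B$ the section $s(B)$ is not contained in the proper closed subset $\varphi^{-1}(\text{any fixed hyperplane})$, and in fact $\varphi(s(B))$ is not contained in any hyperplane of $\mathbb{P}^{n-1}$. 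Since $\dim s(B)=\dim B=n-1=\dim\mathbb{P}^{n-1}$, a constructible dominant-type argument (the image $\varphi(s(B))$ is constructible, irreducible, of dimension $\leq n-1$, and not contained in any hyperplane) forces $\varphi(s(B))=\mathbb{P}^{n-1}$. One has to be a little careful that $s(B)$ is not entirely contained in $Z$ (where $\varphi$ is undefined): this again follows from $\mathrm{codim}_{\mathbb{P}(T_X)}Z\geq 2$ together with the fact that $s\colon B\to\mathbb{P}(T_X)$ is an embedding and $s(B)$ moves in a family covering $\mathbb{P}(T_X)$, so a general member meets $Z$ in codimension $\geq 1$ inside $s(B)$ and thus $\varphi|_{s(B)}$ is a well-defined dominant rational map.

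Concretely I would argue as follows. Fix a general point $p\in\mathbb{P}^{n-1}$ and a general fibre $F\coloneqq\overline{\varphi^{-1}(p)}$; by \cite[Theorem, a)]{BEHLV23} the general fibre of $\varphi$ has dimension $2n-1-(n-1)=n$ and $F$ is (an open subset of) an abelian variety / Lagrangian fibre, in particular $F$ dominates $X$ under $\tau\colon\mathbb{P}(T_X)\to X$. Then I intersect $F$ with the family $\{s(B)\}$: since the sections $s(B)$ cover a dense open subset of $\mathbb{P}(T_X)$ and $F$ has the complementary dimension $n$ to $\dim s(B)=n-1$ inside $\dim\mathbb{P}(T_X)=2n-1$, for general $B$ the intersection $F\cap s(B)$ is nonempty (of expected dimension $0$). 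This shows $p\in\varphi(s(B))$ for general $p$, hence $\varphi(s(B))$ is dense, hence equal to $\mathbb{P}^{n-1}$ by properness of $B$ and constructibility of the image.

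\textbf{Main obstacle.} The delicate point is verifying that $s(B)\not\subseteq Z$ and that the incidence-variety dimension count is sharp, i.e. that the family $\{s(B):B\in|\mathcal{O}_X(d)|\text{ smooth}\}$ genuinely dominates $\mathbb{P}(T_X)$ and meets a general fibre $F$ of $\varphi$. This requires knowing that prescribing a point $x\in X$ and a normal/tangent direction there is achievable by a smooth $B\in|\mathcal{O}_X(d)|$ — a jet-ampleness statement for $\mathcal{O}_X(d)$ with $d\geq 1$ — together with the codimension bound $\mathrm{codim}\,Z\geq 2$ from \cite{BEHLV23}; I expect the bulk of the work to be packaging these two inputs into the incidence-correspondence argument, while the final "constructible + not in any hyperplane $\Rightarrow$ surjective" step is formal.
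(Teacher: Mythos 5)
The central step of your argument does not hold up. From the facts that the sections $s(B)$ sweep out $\mathbb{P}(T_X)$ as $B$ varies and that $\dim s(B)+\dim F=\dim\mathbb{P}(T_X)$, you conclude that for a \emph{general fixed} $B$ the intersection $s(B)\cap F$ with a general fibre $F$ of $\varphi$ is non-empty. This inference is invalid: a covering family of subvarieties of complementary dimension need not have its general member meet a fixed general subvariety (think of the ruling $\{pt\}\times\mathbb{P}^1$ on $\mathbb{P}^1\times\mathbb{P}^1$, or more to the point, the fibres of a fibration, which cover the total space and have complementary dimension to one another but are disjoint). What the covering/incidence argument gives is only that for a general $p\in\mathbb{P}^{n-1}$ \emph{some} $B$ (depending on $p$) has $s(B)$ meeting $\overline{\varphi^{-1}(p)}$; exchanging the quantifiers to ``for general $B$, $s(B)$ meets the general fibre'' is exactly equivalent to the statement $\varphi(s(B))=\mathbb{P}^{n-1}$ being proved, so the argument is essentially circular. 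The crucial subtlety here is that $\zeta_X$ is not big, so one cannot replace the class of a general fibre closure by $(2\zeta_X)^{n-1}$ and argue by positivity: the base locus $Z$ produces correction terms, and the dimension count alone cannot exclude that $\varphi|_{s(B)}$ has positive-dimensional fibres. Your first reduction is also flawed: an irreducible image of dimension $\leq n-1$ that is not contained in any hyperplane need not be all of $\mathbb{P}^{n-1}$ (a rational normal curve in $\mathbb{P}^3$ is non-degenerate), so ``$s(B)$ is in no member of $|2\zeta_X|$'' only gives non-degeneracy of $\varphi(s(B))$, not dominance.

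For comparison, the paper proves the lemma by induction on $n$. For $n=2$ the base locus $Z$ is the union of the $16$ sections $\bar l_i$ over the lines, a general $B$ avoids it, and ampleness of $\zeta_X|_{s(B)}\cong\mathcal{O}_B(d)$ forces the image to be all of $\mathbb{P}^1$. For $n\geq 3$ one cuts with $X'=X\cap\{x_0=0\}$, uses the splitting $T_X|_{X'}\cong T_{X'}\oplus N_{X'/X}$ and the fact (from \cite[Proposition 7.4]{BEHLV23}) that $H^0(X,\Sym^2T_X)\to H^0(X',\Sym^2T_{X'})$ is surjective with one-dimensional kernel $\langle\sigma\rangle$; then $s(B)\cap\mathbb{P}(T_{X'})=s'(B')$, the inductive hypothesis shows the image already fills the hyperplane corresponding to $\sigma$, and it remains only to check $s(B)\not\subset\mathscr{C}=\{\sigma=0\}$, which holds for general $B$. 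If you want to pursue your route you would need a genuine positivity or monodromy input replacing the complementary-dimension heuristic; as written, the proposal has a gap precisely at the point the lemma is supposed to settle.
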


\begin{proof}
	Firstly we assume that $n=2$. Then $X$ is a del Pezzo surface which contains exactly $16$ lines, namely $l_i$ $(1\leq i\leq 16)$. Denote by $\bar{l}_i\subset \mathbb{P}(T_X)$ the section of $\mathbb{P}(T_X)$ over $l_i$ corresponding to the quotient $T_X|_{l_i}\rightarrow N_{l_i/X}$. Then we have $Z=\cup \bar{l}_i$ by \cite[\S,3.2]{HLS22}. On the other hand, as $B$ is general, we may assume that $B$ is not tangent to $l_i$ to any points; that is, $s(B)$ is disjoint from $\bar{l}_i$ and thus $s(B)$ is disjoint from $Z$. In particular, the map $\varphi$ is well-defined along $s(B)$. On the other hand, since $\zeta_X|_{B}\cong \mathcal{O}_{B}(d)$ is ample, we obtain that $\varphi(s(B))=\mathbb{P}^1$.
	
	Now we assume that $n\geq 3$. Note that it suffices to prove that $s(B)$ is not contained in any element in $|2\zeta_X|$. Let $X'$ be the complete intersection $X\cap \{x_0=0\}$. Then $X'$ is a smooth intersection of two quadrics in $\mathbb{P}^{n+1}$. Moreover, the involution is defined as
	\[
	(x_0,x_1,\cdots,x_{n+2}) \mapsto (-x_0,x_1,\cdots,x_{n+2})
	\]
	induces a canonical splitting $T_X|_{X'}\cong T_{X'}\oplus N_{X'/X}$ (see \cite[\S,7.3]{BEHLV23}). Then it follows from \cite[Proposition 7.6]{BEHLV23} that the induced map
	\[
	H^0(X,\Sym^2 T_X) \rightarrow H^0(X',\Sym^2 T_{X'})
	\]
	is surjective and its kernel is generated by a non-zero element $\sigma\in H^0(X,\Sym^2 T_X)$. By Kodaira's vanishing theorem, the restriction 
	\[
	H^0(X,\mathcal{O}_{X}(d))\rightarrow H^0(X',\mathcal{O}_{X'}(d))
	\]
	is surjective. In particular, we may assume that $B'\coloneqq X'\cap B$ is also a general element in $|\mathcal{O}_{X'}(d)|$. Let $s':B'\rightarrow \mathbb{P}(T_{X'})$ be the section corresponding to $T_{X'}|_{B'}\rightarrow N_{B'/X'}$. Then we have $s(B)\cap \mathbb{P}(T_{X'})=s'(B')$. By induction on $n$ and using the natural isomorphism 
	\[
	H^0(X',\Sym^2 T_{X'}) \cong H^0(\mathbb{P}(T_X'),\mathcal{O}_{\mathbb{P}(T_X')}(2\zeta_{X'})),
	\]
	we only need to show that $s(B)$ is not contained in the divisor $\mathscr{C}\in |2\zeta_X|$ on $\mathbb{P}(T_X)$ defined by $\sigma$, which follows immediately as $B$ is general.
\end{proof}

\subsection{Proof of Theorem \ref{thm-intersection-quadrics}} 

Denote by $R$ the ramification divisor of $f$. 
Consider the following exact sequence of sheaves
\[
0\rightarrow T_Y\rightarrow f^*T_X \rightarrow \mathscr{Q}\rightarrow 0,
\]
where $\mathscr{Q}$ is supported on $\textup{Supp}(R)$. 
Moreover, there exists a closed subset $W$ of $X$ with codimension two such that 
\[
\mathscr{Q}|_{R\setminus \overline{W}} \cong \mathcal{O}_{R}(R)|_{R\setminus \overline{W}},
\]
where $\overline{W}=f^{-1}(W)$. 
Let $\mu\colon M\rightarrow \mathbb{P}(T_X)$ be the resolution of the indeterminacy locus of $\varphi$. 
Then we get the following commutative diagram
\[
\begin{tikzcd}[row sep=large,column sep=large]
	M \arrow[d,"\mu"] \arrow[dr,"\nu"] &     \\
	\mathbb{P}(T_X) \arrow[r,dashed, "\varphi"] \arrow[d,"p_X"] & \mathbb{P}^{n-1}\\
	X  & 
\end{tikzcd}
\]
Let $W'$ be the strict transformation of $p_X^{-1}(W)$ in $M$. 
Then $W'$ has codimension two in \(M\).  
In particular, the intersection $F'\cap W$ has also codimension two in $F'$, where $F'$ is a general fibre of $\nu$. 
It follows that $F\cap p_X^{-1}(W)$ has codimension two in $F$, where $F=\mu(F')$. 
As $X$ is general, by \cite[Theorem, c)]{BEHLV23}, the general fiber of the Lagrangian fibration $\Phi: T_X^*\to \mathbb{C}^n$ is of the form $A\setminus Z$, where $A$ is an abelian variety and $codim(Z)\geq 2$. Note that there is an \'etale double cover from the general fiber of the Lagrangian fibration $\Phi: T_X^*\to \mathbb{C}^n$ to $F\setminus Z$ (see the explanation between  Proposition 5.5 and Lemma 5.6 in \cite{BEHLV23}). Hence  $F\setminus Z$ is an open subset of a quasi-\'etale quotient of an abelian variety by the involution,  whose complement is of codimension at least two. 
As a consequence,  $F_{\circ}\coloneqq F\setminus (Z\cup p_X^{-1}(W))$ is also an open subset of a quasi-\'etale quotient of an abelian variety by the involution, whose complement is of codimension at least two. 
Let $B$ be an irreducible component of the branch locus of $f$ which is a general hypersurface of degree $d\geq 1$. In particular, we may assume that $s(B)$ is not contained in $Z\cup p_X^{-1}(W)$ (see \cite[Proposition 3.1]{BEHLV23}). Then it follows from Lemma \ref{l.dominant} that the intersection $s(B)\cap F_{\circ}$ is non-empty. As a consequence, there exists a dominating family $\{C_t\}_{t\in T}$ of irreducible curves on $\mathbb{P}(T_X)$ such that a general member $C_t$ is contained in $F_{\circ}$ for some general $F$ and meets $s(B)$. 
In particular, we have $\zeta_X\cdot C_t=0$ as \(\varphi\) is the Iitaka fibration.

Denote by $\bar{f}\colon\mathbb{P}(f^*T_X)\rightarrow \mathbb{P}(T_X)$ the induced morphism and let $\overline{\zeta}_X$ be the tautological divisor of $\mathbb{P}(f^*T_X)$. Then we have $\bar{f}^*\zeta_X=\overline{\zeta}_X$. Denote by $Y_{\circ}$ the open subset $Y\setminus \overline{W}$ whose complement is also of codimension two in \(Y\). Then the induced exact sequence
\[
0\rightarrow T_Y|_{Y_\circ} \rightarrow f^*T_X|_{Y_{\circ}} \rightarrow \mathscr{Q}|_{R\cap Y_{\circ}} \cong \mathcal{O}_R(R)|_{Y_\circ} \rightarrow 0
\]
induces an elementary transformation of vector bundles (see \cite[\S\,1]{Mar73}) and it induces a commutative diagram
\[
\begin{tikzcd}[row sep=large,column sep=large]
	 &  \Gamma \arrow[dl,"g" above] \arrow[dr,"h"] &   &  \\
	 \mathbb{P}(T_Y|_{Y_{\circ}})   &  & \mathbb{P}(f^*T_X|_{Y_{\circ}}) \arrow[ll,dashed]\arrow[r,"\bar{f}_{\circ}"] & \mathbb{P}(T_X|_{X\setminus W}),
\end{tikzcd}
\]
where $h$ is the blowing-up of the subscheme $D\coloneqq \mathbb{P}(\mathcal{O}_R(R)|_{Y_{\circ}})\subset \mathbb{P}(f^*T_X|_{Y_{\circ}})$. Let $E$ be the exceptional divisor of $h$. By \cite[Theorem 1.1]{Mar73}, we have
\[
\mathcal{O}_{\Gamma}(g^*\zeta_Y) \cong \mathcal{O}_{\Gamma}(h^*\overline{\zeta}_X-E).
\]

As $\bar{f}_{\circ}$ is surjective and $h$ is birational, the family $\{C_t\}_{t\in T}$ of curves on $\mathbb{P}(T_X|_{X\setminus W})$ can be lifted to a dominating family $\{C_{\tilde{t}}\}_{\tilde{t}\in \tilde{T}}$ of irreducible curves on $\Gamma$. Moreover, since $C_t$ always meets $s(B)$, by the definition of $D$ and $s(B)$, there exists an irreducible component $E'$ of $E$ such that $s(B)$ is the closure of $\bar{f}_{\circ}(h(E'))$ and $C_{\tilde{t}}$ always meets $E'$. In particular, we have $E\cdot C_{\tilde{t}}>0$ (see \cite[Theorem 2.2]{BDPP}). 
As $\zeta_X\cdot C_t=0$, we get $h^*\overline{\zeta}_X\cdot C_{\tilde{t}}=0$ by the projection formula. In particular, we obtain
\[
g^*\zeta_Y\cdot C_{\tilde{t}} =(h^*\overline{\zeta}_X-E)\cdot C_{\tilde{t}} < 0.
\]
This implies that $g^*\zeta_Y$ is not pseudo-effective and thus $\zeta_Y$ is not pseudo-effective, either.

\bibliography{ref}

\end{document}